\newtheorem{theorem}{Theorem}[section]
\theoremstyle{plain}
\newtheorem{definition}{Definition}[section]
\newtheorem{lemma}{Lemma}[section]
\newtheorem{remark}{Remark}[section]
\numberwithin{equation}{section}
\begin{document}
\title[Attractors for the Strongly Damped Wave Equation]{Attractors for the
Strongly Damped Wave Equation with $p$-Laplacian}
\author{Azer Khanmamedov \ }
\address{{\small Department of Mathematics,} {\small Faculty of Science,
Hacettepe University, Beytepe 06800}, {\small Ankara, Turkey}}
\email{azer@hacettepe.edu.tr}
\author{\ Zehra \c{S}en}
\address{{\small Department of Mathematics,} {\small Faculty of Science,
Hacettepe University, Beytepe 06800}, {\small Ankara, Turkey}}
\email{zarat@hacettepe.edu.tr}
\subjclass{ 35L05, 35L30, 35B41}
\keywords{wave equation, $p$-Laplacian, attractors}

\begin{abstract}
This paper is concerned with the initial boundary value problem for one
dimensional strongly damped wave equation involving $p$-Laplacian. For $p>2$%
, we establish the existence of weak local attractors for this problem in $%
W_{0}^{1,p}(0,1)\times L^{2}(0,1)$. Under restriction $2<p<4$, we prove that
the semigroup, generated by the considered problem, possesses a strong
global attractor in $W_{0}^{1,p}(0,1)\times L^{2}(0,1)$ and this attractor
is\ a bounded subset of $W^{1,\infty }(0,1)\times W^{1,\infty }(0,1)$.
\end{abstract}

\maketitle

\section{Introduction}

In this paper, we consider the following strongly damped wave equation with $%
p$-Laplacian:%
\begin{equation}
u_{tt}-u_{txx}-\frac{\partial }{\partial x}\left( \left\vert
u_{x}\right\vert ^{p-2}u_{x}\right) +f\left( u\right) =g.  \tag{1.1}
\end{equation}%
The strongly damped wave equations occur in so many physical areas, such as
heat conduction, solid mechanics and so on, which has considerably attracted
many authors to analyze, in particular, the long time dynamics of these
types of equations. It is well known that the long time dynamics of
evolution equations can be described in terms of attractors. The attractors
for (1.1), with $p=2$, have been widely studied by several authors in
multidimensional case under different hypothesis. We refer to [1-11] for
wave equations with the linear strong damping, and to [12-13] for wave
equations with the nonlinear strong damping. These works, as has been
mentioned above, deal with attractors of strongly damped wave equations
involving linear Laplacian. For the strongly damped wave equation with
nonlinear Laplacian, we refer to \cite{14} and \cite{15}. In \cite{14}, the
authors studied the long time behaviour of regular, precisely from the space
$H^{2}$, solutions of the strongly damped wave equation involving nonlinear
Laplacian in the form $\frac{\partial }{\partial x}\sigma (u_{x})$, with $%
\sigma \in C^{1}(%
\mathbb{R}
)$ and $\sigma ^{\prime }(\cdot )\geq r_{0}>0$. Because, under these
conditions on $\sigma $, the nonlinear term $\frac{\partial }{\partial x}%
\sigma (u_{x})$ behaves like $u_{xx}$ for the $H^{2}$-solutions, the authors
were able to successfully apply the splitting method to prove the asymptotic
compactness of the solutions. Although, in \cite{15}, the authors studied
the attractors (in the weak topology) for the weak solutions of strongly
damped wave equation with more general nonlinear Laplacian, the equation
considered in that article contains the additional\ term $-\Delta u$, in
comparison with (1.1). This term, together with nonlinear degenerate
Laplacian, generates indeed non-degenerate Laplacian and thereby allows to
obtain some additional estimates for weak and strong solutions. Unlike the
equations considered in \cite{14} and \cite{15}, the equation (1.1) involves
degenerate Laplacian and therefore we are not able to apply the approaches
of those papers, especially in the study of the strong global attractor. On
the other hand, there are some difficulties also in application of the
asymptotic compactness methods developed in [16-18]. The difficulties are
caused by the absence of the energy equality for a weak solution and the
energy inequality for the difference of two weak solutions of (1.1). To
overcome these difficulties, we require an additional restriction on the
exponent $p$. Namely, under restriction \ $2<p<4$, by using specificity of
the one dimensional case, we show that weak local attractors, the existence
of which we establish for every $p>2$, are bounded subsets of $W^{1,\infty
}(0,1)\times $ $W^{1,\infty }(0,1)$. This fact implies the validity of the
energy equality for the trajectories from the weak local attractors, and
applying the methods of [16-18], we establish the asymptotic compactness of
the weak solutions, which, together with\ the presence of strict Lyapunov
function, leads to the existence of a strong global attractor in $%
W_{0}^{1,p}(0,1)\times L^{2}(0,1)$.

The paper is organized as follows. In the next section, we give the
statement of the problem and main results. In Section 3, we prove the
existence of the weak local attractors in\ $W_{0}^{1,p}(0,1)\times
L^{2}(0,1) $ and show that these attractors attract trajectories in the
topology of $H_{0}^{1}(0,1)\times L^{2}(0,1)$. In Section 4, we first prove
the regularity of the weak local attractors, for $p\in (2,4)$, and then we
establish the existence of the strong global attractor in $%
W_{0}^{1,p}(0,1)\times L^{2}(0,1)$. Finally, in the last section, we give
some auxiliary lemmas.

\section{Statement of the problem and\ main results}

We deal with the following initial-boundary value problem:
\begin{equation}
\left\{
\begin{array}{l}
u_{tt}-u_{txx}-\frac{\partial }{\partial x}\left( \left\vert
u_{x}\right\vert ^{p-2}u_{x}\right) +f\left( u\right) =g\left( x\right) ,%
\text{ \ in }\left( 0,\infty \right) \times \left( 0,1\right) , \\
u\left( \cdot ,0\right) =u\left( \cdot ,1\right) =0,\text{ in }\left(
0,\infty \right) ,\text{ } \\
u\left( 0,\cdot \right) =u_{0}\left( \cdot \right) \ ,\ \ \ u_{t}\left(
0,\cdot \right) =u_{1}\left( \cdot \right) ,\text{ \ in }(0,1).%
\end{array}%
\right.  \tag{2.1}
\end{equation}%
Here
\begin{equation}
p>2,\text{ \ }g\in L^{2}(0,1),  \tag{2.2}
\end{equation}%
and the function $f$ satisfies the following conditions:%
\begin{equation}
f\in C^{1}\left(
\mathbb{R}
\right) \text{ \ , \ }\liminf_{\left\vert s\right\vert \rightarrow \infty }%
\frac{f\left( s\right) }{\left\vert s\right\vert ^{p-2}s}>-\lambda ^{p},
\tag{2.3}
\end{equation}%
where $\lambda =\inf\limits_{\varphi \in W_{0}^{1,p}(0,1),\varphi \neq 0}%
\frac{\left\Vert \varphi ^{\prime }\right\Vert _{L^{p}(0,1)}}{\left\Vert
\varphi \right\Vert _{L^{p}(0,1)}}.$

\bigskip

Let us recall the following definitions.

\begin{definition}
The function $u\in L^{1}(0,T;W_{0}^{1,p}(0,1))$ satisfying $u_{t}\in
L^{1}(0,T;W_{0}^{1,\frac{p}{p-1}}(0,1))\cap C([0,T];W^{-1,\frac{p}{p-1}%
}(0,1))$, $u(0,x)=u_{0}(x)$, $u_{t}(0,x)=u_{1}(x)$ and the equation%
\begin{equation*}
\frac{d}{dt}\int\limits_{0}^{1}u_{t}(t,x)v(x)dx+\int%
\limits_{0}^{1}u_{tx}(t,x)v^{\prime }(x)dx+\int\limits_{0}^{1}\left\vert
u_{x}(t,x)\right\vert ^{p-2}u_{x}(t,x)v^{\prime }(x)dx
\end{equation*}%
\begin{equation*}
+\int\limits_{0}^{1}f(u(t,x))v(x)dx=\int\limits_{0}^{1}g(x)v(x)dx,
\end{equation*}%
in the sense of distributions on $(0,T)$, for all $v\in W_{0}^{1,p}(0,1)$,
is called a weak solution to the problem (2.1) in $\left[ 0,T\right] \times
\lbrack 0,1]$.
\end{definition}

\begin{definition}
Let $\{V(t)\}_{{\small t\geq 0}}$ be an operator semigroup on a linear
normed space $E$ and $B$ be a bounded subset of $E$. A set $\mathcal{A}%
_{B}\subset E$ is called a strong (weak) local attractor for $B$ and the
semigroup $\left\{ V(t)\right\} _{t\geq 0}$ iff

$\bullet $ $\mathcal{A}_{B}$ is strongly (weakly) compact in $E$;

$\bullet $ $\mathcal{A}_{B}$ is invariant, i.e. $V(t)\mathcal{A}_{B}=%
\mathcal{A}_{B}$, $\ \forall t\geq 0$;

$\bullet $ $\mathcal{A}_{B}$ attracts the image of $B$ in the strong (weak)
topology, namely, for every neighborhood $\mathcal{O}$\ of $\mathcal{A}_{B}$
in the strong (weak) topology of $E$ there exists a $T=T(\mathcal{O}$ $)>0$
such that $V(t)B\subset \mathcal{O}$ \ for every $t\geq T$. \newline
\end{definition}

\begin{definition}
Let $\{V(t)\}_{{\small t\geq 0}}$ be an operator semigroup on a linear
normed space $E$. A set $\mathcal{A}\subset E$ is called a strong (weak)
global attractor for the semigroup $\left\{ V(t)\right\} _{t\geq 0}$ iff

$\bullet $ $\mathcal{A}$ is strongly (weakly) compact in $E$;

$\bullet $ $\mathcal{A}$ is invariant, i.e. $V(t)\mathcal{A}=\mathcal{A}$, $%
\ \forall t\geq 0$;

$\bullet $ $\mathcal{A}$ attracts the images of all bounded subsets of $E$
in the strong (weak) topology, namely, for every bounded subset $B$ of $E$
and every neighborhood $\mathcal{O}$\ of $\mathcal{A}$ in the strong (weak)
topology of $E$ there exists a $T=T(B,\mathcal{O}$ $)>0$ such that $%
V(t)B\subset \mathcal{O}$ \ for every $t\geq T$. \newline
\end{definition}

By using the method of \cite{15}, one can prove the following well-posedness
result.

\begin{theorem}
Assume that the conditions (2.2) and (2.3) are satisfied. Then, for any $T>0$
and $u_{0}\in W_{0}^{1,p}(0,1),$ $u_{1}\in L^{2}(0,1)$, the problem (2.1)
admits a unique weak solution $u(t,x)$ which satisfies $u\in L^{\infty
}\left( 0,T;W_{0}^{1,p}(0,1)\right) ,$ $u_{t}\in L^{\infty }\left(
0,T;L^{2}(0,1)\right) \cap L^{2}\left( 0,T;H_{0}^{1}(0,1)\right) ,$ $%
u_{tt}\in L^{2}\left( 0,T;W^{-1,\frac{p}{p-1}}(0,1)\right) $ and the energy
inequality%
\begin{equation}
E(u(t))+\int\limits_{s}^{t}\left\Vert u_{tx}\left( \tau \right) \right\Vert
_{L^{2}\left( 0,1\right) }^{2}d\tau \leq E(u(s)),\text{ \ \ }\forall t\geq
s\geq 0.  \tag{2.4}
\end{equation}%
Moreover, if $\ v\in $ $L^{\infty }\left( 0,T;W_{0}^{1,p}(0,1)\right) \cap
W^{1,\infty }\left( 0,T;L^{2}(0,1)\right) \cap W^{1,2}\left(
0,T;H_{0}^{1}(0,1)\right) \cap $\newline
$W^{2,2}(0,T;W^{-1,\frac{p}{p-1}}(0,1))$ is also a weak solution to (2.1)
with initial data $(v_{0},v_{1})\in W_{0}^{1,p}(0,1)\times $\newline
$L^{2}(0,1)$, then
\begin{equation*}
\left\Vert u(t)-v(t)\right\Vert _{H_{0}^{1}(0,1)}+\left\Vert
u_{t}(t)-v_{t}(t)\right\Vert _{H^{-1}(0,1)}
\end{equation*}%
\begin{equation*}
\leq C(T,\left\Vert (u_{0},u_{1})\right\Vert _{W_{0}^{1,p}(0,1)\times
L^{2}(0,1)},\left\Vert (v_{0},v_{1})\right\Vert _{W_{0}^{1,p}(0,1)\times
L^{2}(0,1)})
\end{equation*}%
\begin{equation}
\times \left( \left\Vert u_{0}-v_{0}\right\Vert _{H_{0}^{1}(0,1)}+\left\Vert
u_{1}-v_{1}\right\Vert _{H^{-1}(0,1)}\right) \text{, \ \ }\forall t\in
\lbrack 0,T]\text{,}  \tag{2.5}
\end{equation}%
where $C:R^{+}\times R^{+}\times R^{+}\rightarrow R^{+}$ is a nondecreasing
function with respect to each variable, $E(u(t))=\frac{1}{2}\left\Vert
u_{t}\left( t\right) \right\Vert _{L^{2}\left( 0,1\right) }^{2}+\frac{1}{p}%
\left\Vert u_{x}\left( t\right) \right\Vert _{L^{p}\left( 0,1\right)
}^{p}+\int\limits_{0}^{1}F(u(t,x))dx-\int\limits_{0}^{1}g(x)u(t,x)dx$ and\ $%
F(u)=\int\limits_{0}^{u}f(z)dz$.
\end{theorem}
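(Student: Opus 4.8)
The plan is to establish the three assertions—existence, the energy inequality (2.4), and the uniqueness/continuous-dependence bound (2.5)—by the Galerkin-plus-monotonicity scheme of \cite{15}, adapted to the degenerate one-dimensional $p$-Laplacian. For existence I would fix the smooth $L^2$-orthonormal basis $\{\sin(k\pi x)\}_{k\geq 1}$ (dense in $W_0^{1,p}(0,1)$), project (2.1) onto its first $n$ modes, and solve the resulting ODE system. Testing the projected equation with $u_t^n$ gives the energy identity $\frac{d}{dt}E(u^n)+\|u_{tx}^n\|_{L^2(0,1)}^2=0$; here hypothesis (2.3) is exactly what makes $\frac1p\|u_x\|_{L^p}^p+\int_0^1 F(u)\,dx$ coercive in $\|u_x\|_{L^p}^p$, since writing $\liminf_{|s|\to\infty} f(s)/(|s|^{p-2}s)>-\lambda^p+\varepsilon$ and integrating bounds $F(s)\geq \frac{-\lambda^p+\varepsilon}{p}|s|^p-C$, after which the definition of $\lambda$, i.e. $\lambda^p\|\varphi\|_{L^p}^p\leq\|\varphi'\|_{L^p}^p$, absorbs the negative part. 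This yields, uniformly in $n$ and on each $[0,T]$, bounds for $u^n$ in $L^\infty(0,T;W_0^{1,p})$, for $u_t^n$ in $L^\infty(0,T;L^2)\cap L^2(0,T;H_0^1)$, and—reading $u_{tt}^n$ off the equation and using $p/(p-1)<2$—in $L^2(0,T;W^{-1,p/(p-1)})$. Global solvability follows, and I would then extract weak-$*$ convergent subsequences.

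Passing to the limit has two delicate points. The nonlinear flux $|u_x^n|^{p-2}u_x^n$ cannot be passed through weak $L^p$-convergence directly, so I would identify its weak limit by the Minty--Browder monotonicity argument, using that $-\partial_x(|\cdot|^{p-2}\cdot)$ is monotone and that the energy identity controls the requisite quantities. For the semilinear term I would invoke Aubin--Lions compactness: the bounds on $u^n$ and $u_t^n$ give strong convergence of $u^n$ in $C([0,T];L^2)$, hence pointwise, while the compact embedding $W_0^{1,p}(0,1)\hookrightarrow C[0,1]$ yields a uniform $L^\infty$ bound, so $f(u^n)\to f(u)$. This produces a weak solution with the stated regularity. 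The energy inequality (2.4) then follows by passing to the limit in the Galerkin energy identity and using weak lower semicontinuity of $\|\cdot\|_{L^2}^2$, $\|\cdot\|_{L^p}^p$ and of the dissipation integral $\int_s^t\|u_{tx}\|_{L^2}^2$; only lower semicontinuity is available, which is precisely why one obtains $\leq$ rather than equality.

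For uniqueness and (2.5) set $w=u-v$, with $v$ the more regular solution, and let $A=-\partial_{xx}$ with Dirichlet conditions, so that $\|w\|_{H_0^1}=\|A^{1/2}w\|_{L^2}$ and $\|w_t\|_{H^{-1}}=\|A^{-1/2}w_t\|_{L^2}$. I would pair the difference equation with the combined multiplier $\delta w+A^{-1}w_t$, where $\delta>0$ is small: the strong damping $-w_{txx}$ contributes $\|w_t\|_{L^2}^2$ with opposite signs in the two pieces, leaving a favorable $(1-\delta)\|w_t\|_{L^2}^2$; the $p$-Laplacian difference paired with $w$ produces the nonnegative monotone term $\int_0^1(|u_x|^{p-2}u_x-|v_x|^{p-2}v_x)(u_x-v_x)\,dx\geq 0$, which I discard; and $f(u)-f(v)$ is handled by the local Lipschitz bound for $f$ on the uniform $L^\infty$-ball containing $u$ and $v$, giving contributions $\lesssim \|w\|_{H_0^1}^2+\|w\|_{H_0^1}\|w_t\|_{H^{-1}}$. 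Collecting the remaining quantities into the (for small $\delta$, coercive) functional $\Phi(t)=\delta\langle w_t,w\rangle+\tfrac{\delta}{2}\|w\|_{H_0^1}^2+\tfrac12\|w_t\|_{H^{-1}}^2$ and applying Gronwall's inequality would give (2.5), with uniqueness as the case $w_0=w_1=0$.

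The main obstacle is exactly this last estimate, for the reason flagged in the introduction: the $p$-Laplacian is degenerate, so no energy inequality is available for the difference of two arbitrary weak solutions, and monotonicity cooperates only with the multiplier $w$, never with $w_t$. Two points force the extra regularity imposed on $v$. First, the duality pairings $\langle w_{tt},w\rangle$ and $\langle w_{tt},A^{-1}w_t\rangle$ must be legitimate, which is why $v$ is assumed to lie in $W^{1,2}(0,T;H_0^1)\cap W^{2,2}(0,T;W^{-1,p/(p-1)})$. Second, and hardest, is controlling the cross term $\langle |u_x|^{p-2}u_x-|v_x|^{p-2}v_x,(A^{-1}w_t)_x\rangle$ generated by the $A^{-1}w_t$-multiplier. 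The naive Hölder bound $\||u_x|^{p-2}u_x-|v_x|^{p-2}v_x\|_{L^{p/(p-1)}}\leq C(\|u_x\|_{L^p}^{p-2}+\|v_x\|_{L^p}^{p-2})\|w_x\|_{L^p}$ combined with the one-dimensional estimate $\|(A^{-1}w_t)_x\|_{L^\infty}\leq C\|w_t\|_{L^2}$ leaves a residual of the form $\|w_x\|_{L^p}^2$, which the discarded monotone term (only $\gtrsim\|w_x\|_{L^p}^p$) does not dominate on bounded sets; so this crude route does not close. Making it close—by exploiting the regularity of $v$ and interpolating between $\|w_t\|_{L^2}$ and $\|w_t\|_{H^{-1}}$ so that the cross term is reabsorbed into $(1-\delta)\|w_t\|_{L^2}^2$ and into $\Phi$ with a constant depending only on $T$ and the data norms—is the technical heart of the argument.
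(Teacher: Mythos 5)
You should note at the outset that the paper contains no proof of Theorem 2.1: it only remarks that the result ``can be proved by the method of \cite{15}''. So your proposal can only be judged on internal correctness. The existence part and the energy inequality (2.4) are outlined soundly and along entirely standard lines: Galerkin on the sine basis, coercivity of $E$ via (2.3) and the constant $\lambda$, Minty--Browder monotonicity for the flux $|u_x|^{p-2}u_x$, Aubin--Lions together with the one--dimensional embedding $W_0^{1,p}(0,1)\hookrightarrow C[0,1]$ for the term $f(u^n)$, and weak lower semicontinuity for (2.4) (with the usual Lebesgue--point or semigroup--restart argument needed to obtain (2.4) for all $s\geq 0$ rather than only $s=0$).

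The genuine gap is in (2.5), and you flag it yourself without closing it. The problematic quantity is the pairing of $|u_x|^{p-2}u_x-|v_x|^{p-2}v_x$ with $\partial_x A^{-1}w_t$. Your ``naive'' Hölder split is not the best one: estimating
\begin{equation*}
\Bigl|\bigl\langle |u_x|^{p-2}u_x-|v_x|^{p-2}v_x,\ \partial_xA^{-1}w_t\bigr\rangle\Bigr|
\leq C\,\|\partial_xA^{-1}w_t\|_{L^{\infty}(0,1)}\,\bigl\|(|u_x|+|v_x|)^{p-2}\bigr\|_{L^{\frac{p}{p-2}}(0,1)}\,\|w_x\|_{L^{\frac{p}{2}}(0,1)}
\end{equation*}
and using $\|\partial_xA^{-1}w_t\|_{L^{\infty}(0,1)}\leq C\|w_t\|_{L^{2}(0,1)}$ gives a bound $C\|w_t\|_{L^{2}}\|w_x\|_{L^{p/2}}$, which for $2<p\leq 4$ is $\leq C\|w_t\|_{L^{2}}\|w_x\|_{L^{2}}$ and is absorbed by Young's inequality into the damping term and the functional $\Phi$; so the route you dismiss does close in that range. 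For $p>4$, however, $\|w_x\|_{L^{p/2}}$ is no longer controlled by $\|w\|_{H_0^1}$, and the interpolation you gesture at (between $\|w_x\|_{L^2}$ and $\|w_x\|_{L^p}$, reabsorbing the $L^p$ factor into the discarded monotone term) produces, after Young's inequality, a power of $\|w_x\|_{L^2}$ strictly below $2$, hence an additive constant in the Gronwall inequality --- which is fatal, since uniqueness requires the homogeneous bound $\Phi(t)\leq C\Phi(0)$. The theorem asserts (2.5), and with it uniqueness, for every $p>2$; your proposal supplies no argument for $p\geq 4$ beyond the statement that making the estimate close ``is the technical heart of the argument''. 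That is a promissory note, not a proof, and it sits exactly at the step on which the whole theorem (and the semigroup structure used throughout the paper) depends.
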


By Theorem 2.1, it is immediately seen that the problem (2.1) generates a
semigroup $\left\{ S\left( t\right) \right\} _{t\geq 0}$ in $%
W_{0}^{1,p}\left( 0,1\right) \times L^{2}\left( 0,1\right) $, by the rule $%
S\left( t\right) \left( u_{0},u_{1}\right) =\left( u\left( t\right)
,u_{t}\left( t\right) \right) $, where $u(t,x)$ is the weak solution of the
problem (2.1).

Our main results are as follows.

\begin{theorem}
Let the conditions (2.2) and (2.3) hold. Then, for every bounded subset $B$
of $W_{0}^{1,p}\left( 0,1\right) \times L^{2}\left( 0,1\right) $ the
semigroup $\left\{ S\left( t\right) \right\} _{t\geq 0}$, generated by the
problem (2.1), has a weak local attractor $\mathcal{A}_{B}$ in $%
W_{0}^{1,p}(0,1)\times L^{2}(0,1)$. Moreover, the weak local attractor $%
\mathcal{A}_{B}$ attracts the image of $B$ in the strong topology of $%
H_{0}^{1}(0,1)\times L^{2}\left( 0,1\right) $.
\end{theorem}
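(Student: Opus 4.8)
The plan is to construct $\mathcal{A}_B$ as the weak $\omega$-limit set of $B$, verify the defining properties of a weak local attractor, and then upgrade the attraction to the strong $H_0^1\times L^2$ topology by a separate asymptotic-compactness argument. First I would establish dissipativity: condition (2.3) together with the variational characterization of $\lambda$ gives, for some $\delta>0$ and $C\geq0$, the coercivity $E(u)\geq\frac12\|u_t\|_{L^2}^2+\delta\|u_x\|_{L^p}^p-C$; since $E$ is bounded on the bounded set $B$, the energy inequality (2.4) forces $\sup_{t\geq0}\|S(t)b\|_{W_0^{1,p}\times L^2}<\infty$ uniformly in $b\in B$. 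Hence the forward orbit $\gamma^+(B)=\bigcup_{t\geq0}S(t)B$ is bounded, its weak closure $K$ is weakly compact by reflexivity, and, $W_0^{1,p}(0,1)\times L^2(0,1)$ being separable and reflexive, the weak topology is metrizable on $K$. I may therefore argue with sequences on the compact metric space $(K,\mathrm{weak})$ and set $\mathcal{A}_B=\omega_w(B)=\bigcap_{s\geq0}\overline{\bigcup_{t\geq s}S(t)B}^{\,w}$.

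The second step is the weak sequential continuity of $S(t)$ on bounded sets. Given $b_n\rightharpoonup b$, the uniform bounds of Theorem 2.1 yield weak-$*$ limits; the one-dimensional compact embedding $W_0^{1,p}(0,1)\hookrightarrow\hookrightarrow C[0,1]$ and Aubin--Lions give $u^n\to u$ in $C([0,T]\times[0,1])$, so $f(u^n)\to f(u)$, while the $p$-Laplacian term is treated by the Minty--Browder monotonicity trick to identify the weak limit of $|u_x^n|^{p-2}u_x^n$ as $|u_x|^{p-2}u_x$. Passing to the limit in the weak formulation and invoking uniqueness (Theorem 2.1) gives $S(t)b_n\rightharpoonup S(t)b$. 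Thus $S(t):K\to K$ is continuous for the metrizable weak topology and $t\mapsto S(t)b$ is weakly continuous, so the classical theory of $\omega$-limit sets on compact metric spaces shows that $\mathcal{A}_B$ is nonempty, weakly compact, invariant, and attracts $B$ in the weak topology. This proves the first assertion.

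It remains to upgrade attraction to the strong topology of $H_0^1\times L^2$. By the weak attraction just proved, it suffices to prove asymptotic compactness in $H_0^1\times L^2$: whenever $t_n\to\infty$, $b_n\in B$ and $S(t_n)b_n\rightharpoonup\xi=(\xi_1,\xi_2)$, the convergence is strong in $H_0^1\times L^2$; equivalently, since $L^2$ is uniformly convex, that $\|u_t^n(t_n)\|_{L^2}\to\|\xi_2\|_{L^2}$ and $\|u_x^n(t_n)\|_{L^2}\to\|\xi_{1,x}\|_{L^2}$. For the velocity component I would exploit the linear strong damping: $v^n:=u_t^n$ solves the heat equation $v_t^n-v_{xx}^n=\partial_x(|u_x^n|^{p-2}u_x^n)-f(u^n)+g$, and writing $v^n(t_n)$ by variation of constants over a window $[t_n-T,t_n]$, the compactness of the Dirichlet heat semigroup $e^{T\Delta}$ on $L^2(0,1)$ converts the weak convergence of $v^n(t_n-T)$ into strong $L^2$-convergence of $u_t^n(t_n)$. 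For the position component I would use the specifically one-dimensional inversion of the degenerate operator: the stress $\Phi^n:=|u_x^n|^{p-2}u_x^n$ satisfies $\partial_x\Phi^n=u_{tt}^n-u_{txx}^n+f(u^n)-g$, so $u_x^n=\beta(\Phi^n)$ with $\beta(s)=|s|^{p'-2}s$; establishing strong compactness of $\Phi^n$ (its spatial derivative is controlled in $W^{-1,p'}$, and once the velocity is shown to converge strongly in $L^2(0,T;H_0^1)$ the term $u_{txx}^n$ becomes strongly convergent in $L^2(0,T;H^{-1})$) would give $u_x^n(t_n)\to\xi_{1,x}$ strongly in $L^p$, hence in $L^2$. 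Weak lower semicontinuity of $E$ and the nonnegativity of the monotone $p$-Laplacian difference are used throughout to close the energy balance.

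The main obstacle is precisely this last, position-component, step. The natural multipliers --- the velocity $u_t$ and the time derivative of a difference of two solutions --- both force the pairing of $|u_x|^{p-2}u_x\in L^{p'}$ with $u_{tx}\in L^2$, which is non-integrable for $p>2$ since $p'<2$; this is the analytic content of the ``absence of the energy equality'' noted in the introduction, and it blocks both the energy method of [16-18] and any direct difference estimate, because the degenerate $p$-Laplacian provides no smoothing of the position $u$ (so one can never anchor the strong convergence of $u_x$ at an earlier time). Overcoming it is exactly where the one-dimensional structure is indispensable: the explicit inversion $u_x=\beta(\Phi)$ transfers the needed compactness from $u_x$ to the stress $\Phi$, whose extra spatial regularity, combined with the parabolic smoothing of $u_t$ supplied by $-u_{txx}$, is what finally delivers strong convergence in $H_0^1\times L^2$.
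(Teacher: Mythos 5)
Your first two steps (uniform bound from (2.3)--(2.4), construction of $\mathcal{A}_B=\omega_w(B)$, invariance, and the velocity-component compactness via the variation-of-constants formula for $v=u_t$ and the smoothing of the Dirichlet heat semigroup) are essentially the paper's argument; the paper phrases the velocity step as a uniform bound $\sup_{t\geq 1}\|u_t(t)\|_{H^{1-\varepsilon}}\leq \tilde c_\varepsilon$ for $\varepsilon\in(\tfrac{p-2}{2p},1]$, and proves invariance via the Lipschitz estimate (2.5) rather than Minty--Browder, but these are minor variations. The genuine gap is in the position component of the asymptotic compactness, precisely the step you flag as ``the main obstacle'': your stress-inversion plan does not close. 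First, it is conditioned on strong convergence of the velocity in $L^2(0,T;H_0^1)$, which you never obtain and which the parabolic smoothing does not supply (it gives $H^{1-\varepsilon}$ with $\varepsilon>\tfrac{p-2}{2p}>0$, so $u^n_{txx}$ is only controlled in $H^{-1-\varepsilon}$). Second, controlling $\partial_x\Phi^n$ in a negative Sobolev space bounds $\Phi^n$ in a space no better than $L^{p'}$ in which it is already bounded; one gets at best \emph{weak} $L^{p'}$ compactness of $\Phi^n$, and weak convergence cannot be pushed through the nonlinear inverse $\beta(s)=|s|^{p'-2}s$ to give strong convergence of $u_x^n$. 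Third, to evaluate $\partial_x\Phi^n$ at the single time $t_n$ you need a pointwise-in-time bound on $u_{tt}$, which for weak solutions is only available in $L^2$ in time; the paper obtains such a pointwise bound (4.7) only on complete bounded trajectories and only under $p<4$, which is why this circle of ideas is reserved for Theorem 2.3, not Theorem 2.2.

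The device actually used in the paper is entirely different and you are missing it. Form the difference equation (3.7) for $u_m-u_n$ and test it with $2t(u_m-u_n)$ (not with a time derivative, so the forbidden pairing of $|u_x|^{p-2}u_x$ with $u_{tx}$ never appears). The monotonicity inequality $\left(|x|^{p-2}x-|y|^{p-2}y\right)(x-y)\geq c|x-y|^p$ produces the good term $c_3\int_0^T t\|u_{mx}-u_{nx}\|_{L^p}^p\,dt$ on the left together with $T\|u_{mx}(T)-u_{nx}(T)\|_{L^2}^2$, while the troublesome term $\int_0^T\|u_{mx}-u_{nx}\|_{L^2}^2\,dt$ on the right is absorbed by splitting $(0,T)$ according to whether $\|u_{mx}(t)-u_{nx}(t)\|_{L^2}^{p-2}\geq \tfrac{1}{c_3 t}$, at the cost of $c_5\bigl(T^{\max\{0,\frac{p-4}{p-2}\}}+\ln T\bigr)$. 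All remaining terms vanish in the iterated limit by the weak convergences and the velocity compactness, giving $\limsup_m\limsup_n\|u_{mx}(T)-u_{nx}(T)\|_{L^2}^2\leq c_6\bigl(T^{\max\{0,\frac{p-4}{p-2}\}}+\ln T\bigr)/T\to 0$. Note this yields convergence only in $H_0^1$, which is exactly what Theorem 2.2 claims (strong attraction in $H_0^1\times L^2$, weak in $W_0^{1,p}\times L^2$); your proposal implicitly aims at strong $L^p$ convergence of $u_x^n$, which is the harder statement of Theorem 2.3 and requires $p<4$.
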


\begin{theorem}
If, in addition to the conditions (2.2) and (2.3), we assume that $p<4$,
then the semigroup $\left\{ S\left( t\right) \right\} _{t\geq 0}$ possesses
a strong global attractor $\mathcal{A}$ in $W_{0}^{1,p}(0,1)\times
L^{2}(0,1) $ and $\mathcal{A=M}^{u}\left( \mathcal{N}\right) $. Moreover,
the global attractor $\mathcal{A}$ is bounded in $W^{1,\infty }(0,1)\times $
$W^{1,\infty }(0,1)$. Here $\mathcal{M}^{u}\left( \mathcal{N}\right) $ is
unstable manifold emanating from the set of stationary points $\mathcal{N}$
(for definition, see \cite[p. 359]{18}).
\end{theorem}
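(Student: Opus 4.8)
The plan is to exhibit $\{S(t)\}_{t\ge 0}$ as a gradient system that is asymptotically compact in the strong topology of $W_0^{1,p}(0,1)\times L^2(0,1)$, and then to invoke the abstract description of the attractor of a gradient system as the unstable manifold of its equilibrium set \cite[p. 359]{18}. First I would check that $E$ is a strict Lyapunov functional. By the energy inequality (2.4) the map $t\mapsto E(S(t)(u_0,u_1))$ is nonincreasing, and if it is constant on an interval $[s,t]$ then (2.4) forces $\int_s^t\|u_{tx}(\tau)\|_{L^2(0,1)}^2\,d\tau=0$, so $u_{tx}\equiv 0$ there; the Dirichlet condition then gives $u_t\equiv 0$, i.e. the orbit is an equilibrium. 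Condition (2.3) together with the inequality defining $\lambda$ makes $E$ coercive on $W_0^{1,p}(0,1)\times L^2(0,1)$, which simultaneously shows that every orbit is bounded and that the set $\mathcal{N}$ of stationary points is bounded.

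The decisive ingredient is the regularity of the weak local attractors: for $2<p<4$, each $\mathcal{A}_B$ produced by Theorem 2.2 is a bounded subset of $W^{1,\infty}(0,1)\times W^{1,\infty}(0,1)$. Since every point of $\mathcal{A}_B$ lies on a complete orbit that is bounded in $W_0^{1,p}(0,1)\times L^2(0,1)$, I would bootstrap regularity along such orbits, exploiting the smoothing of the strong damping $-u_{txx}$ and the one-dimensional embedding $H^1(0,1)\hookrightarrow C[0,1]$. Testing the equation, and its formal spatial and temporal derivatives, against multipliers such as $-u_{xx}$ and $-u_{txx}$ should yield uniform bounds on $\|u_{xx}\|_{L^2(0,1)}$ and $\|u_{txx}\|_{L^2(0,1)}$ along the orbit, and the one-dimensional embedding upgrades these to bounds on $\|u_x\|_{L^\infty(0,1)}$ and $\|u_{tx}\|_{L^\infty(0,1)}$, that is, to $W^{1,\infty}\times W^{1,\infty}$ bounds. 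This is where $p<4$ enters: differentiating the degenerate term $\frac{\partial}{\partial x}(|u_x|^{p-2}u_x)$ produces the factor $(p-1)|u_x|^{p-2}$, and closing the estimates requires absorbing this factor by a Gagliardo--Nirenberg interpolation that is admissible only for $p<4$. I expect this bootstrap to be the main obstacle, because the Laplacian is degenerate and the baseline regularity of weak solutions guaranteed by Theorem 2.1 is weak.

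With the regularity of $\mathcal{A}_B$ in hand, every orbit lying on a local attractor is a strong solution, so the energy equality (not merely the inequality (2.4)) holds along it, and the difference estimate (2.5) is available in full force. I would then prove asymptotic compactness of $\{S(t)\}$ in the strong topology of $W_0^{1,p}(0,1)\times L^2(0,1)$ by the energy method of [16-18]. Given $t_n\to\infty$ and bounded data, weak compactness and (2.5) produce a limiting complete bounded orbit, which by the previous step lies on the attractor and hence satisfies the energy equality; comparing this equality with the energy inequality of the approximating solutions forces convergence of the $L^p$-energies, and by the uniform convexity of $L^p(0,1)$ this upgrades the weak convergence of $u_x$ in $L^p$ to strong convergence. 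This is precisely the convergence that the (noncompact) embedding $W_0^{1,p}(0,1)\hookrightarrow H_0^1(0,1)$ fails to provide, and it is what strong attraction in $W_0^{1,p}(0,1)$ requires.

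Finally I would assemble the pieces. The strict Lyapunov functional $E$, the boundedness of the equilibrium set $\mathcal{N}$, and the asymptotic compactness just established place $\{S(t)\}$ in the scope of the gradient-system theorem \cite[p. 359]{18}, which yields the existence of the strong global attractor $\mathcal{A}$ in $W_0^{1,p}(0,1)\times L^2(0,1)$ together with the identification $\mathcal{A}=\mathcal{M}^u(\mathcal{N})$. Since $\mathcal{A}$ consists of complete orbits that are bounded in the phase space, the regularity established in the second step applies to all of them uniformly, so $\mathcal{A}$ is a bounded subset of $W^{1,\infty}(0,1)\times W^{1,\infty}(0,1)$, which completes the proof.
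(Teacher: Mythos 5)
Your overall architecture --- strict Lyapunov functional from (2.4), regularity of the weak local attractors, asymptotic compactness via the energy method of [16--18], and then the gradient--system corollary of \cite{18} to get $\mathcal{A}=\mathcal{M}^u(\mathcal{N})$ --- is exactly the paper's (Lemmas 4.1, 4.2 and [18, Corollary 7.5.7]). The gap is in the one step you yourself flag as decisive: the $W^{1,\infty}\times W^{1,\infty}$ regularity of $\omega_w(B)$. Your plan --- test the equation and its derivatives with $-u_{xx}$ and $-u_{txx}$ to get uniform $H^2$ bounds, then use $H^2(0,1)\hookrightarrow W^{1,\infty}(0,1)$ --- is the splitting/energy approach of \cite{14}, and the authors state explicitly in the introduction that it does not apply here precisely because the Laplacian is degenerate: the multiplier $-u_{xx}$ produces the dissipative term $(p-1)\int_0^1|u_x|^{p-2}u_{xx}^2\,dx$, which vanishes wherever $u_x=0$ and therefore cannot absorb the lower--order terms (e.g.\ $\int_0^1(f(u)-g)u_{xx}\,dx$ with $g$ only in $L^2$), while the baseline regularity of weak solutions does not even allow $\|u_{xx}\|_{L^2}$ to be written down. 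Moreover your proposed mechanism for the restriction $p<4$ (a Gagliardo--Nirenberg absorption of the factor $(p-1)|u_x|^{p-2}$) is not substantiated and is not where the restriction actually comes from.

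The paper's route is different and avoids $H^2$ entirely. It works on a complete invariant trajectory in $\omega_w(B)$ and uses the parabolic smoothing of $v_t+Av=h$ twice, in \emph{negative} Sobolev spaces: since $|u_x|^{p-2}u_x\in L^{p/(p-1)}\hookrightarrow H^{-\frac{p-2}{2p}}$, one gets $u_{tx}\in H^{-\varepsilon}$ uniformly, and differentiating in time, $h'\in H^{-1-\frac{p-2}{p}}$ gives $u_{tt}\in H^{-\varepsilon}$ uniformly for $\varepsilon>\frac{p-2}{p}$. The hypothesis $p<4$ enters only to guarantee $\frac{p-2}{p}<\frac12$, so that $\varepsilon$ can be chosen in $(\frac{p-2}{p},\frac12)$. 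Then the equation is rewritten as $\frac{\partial}{\partial x}\bigl(u_{tx}+|u_x|^{p-2}u_x\bigr)=u_{tt}+f(u)-g\in H^{-\varepsilon}$, and Lemma A.1 (a function with itself and its derivative in $H^{-\varepsilon}$, $\varepsilon<\tfrac12$, is continuous) yields a uniform pointwise bound $|u_{tx}+|u_x|^{p-2}u_x|\le c$. Finally Lemma A.2 treats this, for each fixed $x$, as a differential inequality in $t$ for $w=u_x$ and extracts uniform $L^\infty$ bounds on $u_x$ and $u_{tx}$ separately. Without this (or some substitute for your unproved $H^2$ bootstrap), the energy equality (4.15) on the limiting trajectory --- on which your asymptotic--compactness argument rests --- is not available, so the proof does not close. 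The remaining pieces of your sketch (Radon--Riesz in $L^p$ in place of the monotonicity inequality (3.8) used in (4.16)--(4.17)) are fine modulo the $O(1/t)$ error term in the energy comparison, which the paper handles by letting $T_0\to\infty$.
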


\begin{remark}
We note that the existence of the strong global attractor in $W^{1,p}\times
L^{2}$, for $p\geq 4$ in one dimensional case and for $p>2$ in
multidimensional case, is still an open question.
\end{remark}

\section{Weak local attractors}

In this section, our aim is to prove the existence of the weak local
attractors in $W_{0}^{1,p}\left( 0,1\right) \times L^{2}\left( 0,1\right) $
for the semigroup $\left\{ S\left( t\right) \right\} _{t\geq 0}$, generated
by the problem (2.1). To this end, we need the following lemmas.

\begin{lemma}
Let the conditions (2.2)-(2.3) hold and $B$ be a bounded subset of $%
W_{0}^{1,p}\left( 0,1\right) \times L^{2}\left( 0,1\right) $. Then every
sequence of the form $\left\{ S\left( t_{k}\right) \varphi _{k}\right\}
_{k=1}^{\infty }$, where $\left\{ \varphi _{k}\right\} _{k=1}^{\infty
}\subset B$, $t_{k}\rightarrow \infty $, has a convergent subsequence in $%
H_{0}^{1}\left( 0,1\right) \times L^{2}\left( 0,1\right) $.
\end{lemma}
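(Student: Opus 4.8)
The plan is to run a time-shifted energy argument in the spirit of [16--18], the whole difficulty being concentrated in the degenerate $p$-Laplacian. First I would record the uniform bounds: the energy inequality (2.4) together with the coercivity built into (2.3) shows that $\{S(t)\varphi:\ t\ge0,\ \varphi\in B\}$ is bounded in $W_0^{1,p}(0,1)\times L^2(0,1)$ and that $\int_0^\infty\|u_{tx}(\tau)\|_{L^2}^2\,d\tau<\infty$. Writing $u^n$ for the solution with data $\varphi_n$ and setting $U^n(s):=u^n(t_n+s)$, each $U^n$ is, for large $n$, defined on $[-T,T]$ with energy bounded uniformly; hence $\{U^n\}$ is bounded in $L^\infty(-T,T;W_0^{1,p})$, $\{U^n_t\}$ in $L^\infty(-T,T;L^2)\cap L^2(-T,T;H_0^1)$ and $\{U^n_{tt}\}$ in $L^2(-T,T;W^{-1,\frac{p}{p-1}})$. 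By Aubin--Lions--Simon (using $W_0^{1,p}\hookrightarrow\hookrightarrow C[0,1]$ in one dimension, and $H_0^1\hookrightarrow\hookrightarrow L^2\hookrightarrow W^{-1,\frac{p}{p-1}}$), after passing to a subsequence and diagonalizing in $T$, $\{U^n\}$ is Cauchy in $C([-T,T];C[0,1])$ and $\{U^n_t\}$ is Cauchy in $L^2(-T,T;L^2)$ for every $T$. It then suffices to prove $\{(U^n(0),U^n_t(0))\}=\{S(t_n)\varphi_n\}$ is Cauchy in $H_0^1(0,1)\times L^2(0,1)$.

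For the first component I would work with $w=U^n-U^m$. The crucial point is that $w(t)\in W_0^{1,p}(0,1)$, so $w$ is an admissible test function and the pairing $\langle w_{tt},w\rangle$ between $W^{-1,\frac{p}{p-1}}$ and $W_0^{1,p}$ is legitimate; this is exactly the step that fails for $w_t$, since $w_t\in H_0^1\not\subset W_0^{1,p}$ when $p>2$. Testing the difference of the two weak formulations with $w$ yields
\[
\frac{d}{dt}\Big(\tfrac12\|w_x\|_{L^2}^2+\int_0^1 w_t\,w\,dx\Big)=\|w_t\|_{L^2}^2-M^{nm}(t)-\int_0^1\big(f(U^n)-f(U^m)\big)w\,dx,
\]
where $M^{nm}(t)=\int_0^1\big(|U^n_x|^{p-2}U^n_x-|U^m_x|^{p-2}U^m_x\big)(U^n_x-U^m_x)\,dx\ge c_p\|w_x(t)\|_{L^p}^p\ge0$ by the standard monotonicity inequality for $p\ge2$. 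Setting $a_{nm}(t)=\tfrac12\|w_x(t)\|_{L^2}^2$, the facts that $\int_{-T}^0\|w_t\|_{L^2}^2\to0$, that $w\to0$ in $C([-T,0];L^2)$ and that $M^{nm}\ge0$ make $a_{nm}$ asymptotically non-increasing on $[-T,0]$, while integrating the identity bounds $\int_{-T}^0 M^{nm}$ by $a_{nm}(-T)\le\mathrm{const}$, uniformly in $T$. Picking a good time $\sigma\in(-T,0)$ with $\|w_x(\sigma)\|_{L^p}^p\le\frac1T\int_{-T}^0\|w_x\|_{L^p}^p$ and invoking the near-monotonicity gives $\limsup_{n,m}a_{nm}(0)\le C\,T^{-2/p}$ for every $T$, whence $\|U^n_x(0)-U^m_x(0)\|_{L^2}\to0$. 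The same argument centered at any fixed time shows $a_{nm}(t)\to0$ for all $t$, and then the $M^{nm}$-identity upgrades this to $\int_{-T}^0\|U^n_x-U^m_x\|_{L^p}^p\to0$.

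For the velocity I would use that $q:=w_t$ solves the heat equation $q_t-q_{xx}=\partial_x\big(|U^n_x|^{p-2}U^n_x-|U^m_x|^{p-2}U^m_x\big)-\big(f(U^n)-f(U^m)\big)$ with homogeneous Dirichlet data. Applying the Duhamel formula on $(-T,0)$ with the Dirichlet heat semigroup $e^{-tA}$, $A=-\partial_{xx}$, the initial term $e^{-TA}q(-T)$ decays exponentially in $T$, the $f$-contribution tends to $0$ for fixed $T$ as $n,m\to\infty$, and the rough elastic contribution is controlled via the smoothing estimate $\|e^{-\tau A}\partial_x\phi\|_{L^2}\le C\tau^{-(3p-2)/(4p)}\|\phi\|_{L^{p/(p-1)}}$ together with $\|\,|U^n_x|^{p-2}U^n_x-|U^m_x|^{p-2}U^m_x\|_{L^{p/(p-1)}}\le C\|U^n_x-U^m_x\|_{L^p}$; after Hölder in time the temporal singularity is integrable precisely because $\tfrac{3p-2}{4(p-1)}<1\Leftrightarrow p>2$, and the space--time smallness from the previous paragraph forces this term to $0$ for fixed $T$. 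Letting $T\to\infty$ gives $\|U^n_t(0)-U^m_t(0)\|_{L^2}\to0$, so $\{S(t_n)\varphi_n\}$ is Cauchy, hence convergent, in $H_0^1\times L^2$.

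The main obstacle is exactly the degeneracy of the Laplacian. Because $w_t\notin W_0^{1,p}$ there is no energy (in)equality for the difference of two weak solutions, so $H_0^1$-compactness of the positions cannot be read off from one dissipative functional and must instead be squeezed out of the monotone term $M^{nm}$ through the good-time/near-monotonicity mechanism; and because the elastic term lives only in $W^{-1,\frac{p}{p-1}}$, the velocity compactness has to be recovered from the parabolic smoothing of the strong damping, which becomes usable only after the gradient convergence is in hand and only for $p>2$. This is also why the conclusion is limited to the weaker space $H_0^1\times L^2$ rather than $W_0^{1,p}\times L^2$.
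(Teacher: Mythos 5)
Your argument is correct for every $p>2$ and reaches the stated conclusion, but it is organized differently from the paper's proof in one essential respect. For the position component you and the paper rely on the same mechanism: the monotonicity inequality $\left( \left\vert x\right\vert ^{p-2}x-\left\vert y\right\vert^{p-2}y\right) \left( x-y\right) \geq c\left\vert x-y\right\vert ^{p}$ inserted into an energy identity for the difference $w$ of two shifted trajectories, combined with the smallness of $\int \left\Vert w_{t}\right\Vert _{L^{2}}^{2}$ and of the $f$-difference term. The paper tests with the weighted function $2t\left( u_{m}-u_{n}\right) $ on $\left( 0,T\right) $ and absorbs the resulting term $\int_{0}^{T}\left\Vert w_{x}\right\Vert _{L^{2}}^{2}dt$ by splitting over the set where $\left\Vert w_{x}(t)\right\Vert _{L^{2}}^{p-2}\geq \frac{1}{c_{3}t}$, obtaining the rate $\left( T^{\max \{0,\frac{p-4}{p-2}\}}+\ln T\right) /T$; you test with $w$ itself and replace the weight by a mean-value ``good time'' plus near-monotonicity of $\left\Vert w_{x}\right\Vert _{L^{2}}^{2}$, obtaining $T^{-2/p}$. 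These are interchangeable. The genuine divergence is the velocity component: the paper does not run any Cauchy argument there at all, but proves for a \emph{single} trajectory the uniform smoothing bound $\sup_{t\geq 1}\left\Vert u_{t}(t)\right\Vert _{H^{1-\varepsilon }(0,1)}\leq \widetilde{c}_{\varepsilon }$ via Duhamel for $v=u_{t}$ and the analytic semigroup estimate (3.4), so that the second components lie in a fixed compact subset of $L^{2}(0,1)$; this same bound is then reused to kill the term $\int_{0}^{T}t\left\Vert u_{mt}-u_{nt}\right\Vert _{L^{2}}^{2}dt$ in the position estimate, and again in Section 4. You instead get $L_{loc}^{2}(L^{2})$-compactness of the velocities softly from Aubin--Lions (using $u_{tt}\in L^{2}(W^{-1,\frac{p}{p-1}})$) and then run Duhamel on the difference $q=w_{t}$, feeding the already-established $L^{p}$-gradient convergence through the smoothing estimate with singularity $\tau ^{-\frac{3p-2}{4p}}$; your exponent bookkeeping ($\frac{3p-2}{4(p-1)}<1\Leftrightarrow p>2$) is correct, and the factorization $\left\Vert \left\vert U_{x}^{n}\right\vert ^{p-2}U_{x}^{n}-\left\vert U_{x}^{m}\right\vert^{p-2}U_{x}^{m}\right\Vert _{L^{p/(p-1)}}\leq C\left\Vert U_{x}^{n}-U_{x}^{m}\right\Vert _{L^{p}}$ is legitimate. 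Your route is self-contained but makes the velocity convergence depend on the position convergence; the paper's route decouples the two and yields the extra regularity information on $u_{t}$ that the later regularity argument needs.
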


\begin{proof}
We first note, by (2.2)-(2.4), that
\begin{equation}
\sup\limits_{t\geq 0}\sup\limits_{\varphi \in B}\left\Vert S\left( t\right)
\varphi \right\Vert _{W_{0}^{1,p}\left( 0,1\right) \times L^{2}\left(
0,1\right) }<\infty .  \tag{3.1}
\end{equation}%
Let $\left( u_{0},u_{1}\right) \in B$ and $\left( u\left( t\right)
,u_{t}\left( t\right) \right) =S\left( t\right) \left( u_{0},u_{1}\right) $.
Denoting $v:=u_{t}$, by (2.1), we obtain%
\begin{equation}
\left\{
\begin{array}{c}
v_{t}+Av=h, \\
v\left( 0\right) =u_{1},%
\end{array}%
\right.  \tag{3.2}
\end{equation}%
where $A:H^{2}(0,1)\cap H_{0}^{1}(0,1)\subset L^{2}\left( 0,1\right)
\rightarrow L^{2}\left( 0,1\right) ,$ $A=-\frac{\partial ^{2}}{\partial x^{2}%
}$ and $h=\frac{\partial }{\partial x}\left( \left\vert u_{x}\right\vert
^{p-2}u_{x}\right) -f\left( u\right) +g$. By the variation of parameters
formula, from (3.2), we have
\begin{equation}
v\left( t\right) =e^{-tA}u_{1}+\int\limits_{0}^{t}e^{-\left( t-\tau \right)
A}h\left( \tau \right) d\tau .  \tag{3.3}
\end{equation}%
By (3.1), it is easy to see that
\begin{equation*}
\left\Vert h\left( t\right) \right\Vert _{H^{-1-\frac{p-2}{2p}}\left(
0,1\right) }\leq c_{1},\text{ \ \ }\forall t\geq 0.
\end{equation*}%
Hence, in (3.3), applying the following well known decay estimate,
\begin{equation*}
\left\Vert e^{-tA}\right\Vert _{\mathcal{L}\left( D(A^{s}),D(A^{\sigma
})\right) }
\end{equation*}%
\begin{equation}
\leq Me^{-\omega t}t^{-(\sigma -s)}\text{, }M\geq 1,\text{ }\omega >0\text{,
}t>0,\text{ }s\leq \sigma ,  \tag{3.4}
\end{equation}%
which can be established for example by the method demonstrated in \cite[p.
116]{19}, and by using $D(A^{\tau })=\left\{
\begin{array}{c}
H^{2\tau }(0,1),\text{ }\tau \in (-\frac{3}{4},\frac{1}{4}] \\
H_{0}^{2\tau }(0,1),\text{ }\tau \in (\frac{1}{4},\frac{3}{4})%
\end{array}%
\right. $,\ we find%
\begin{equation*}
\left\Vert v\left( t\right) \right\Vert _{H^{1-\frac{p-2}{2p}-\delta }\left(
0,1\right) }\leq Mt^{-\frac{1}{2}\left( 1-\frac{p-2}{2p}-\delta \right)
}e^{-\omega t}\left\Vert u_{1}\right\Vert _{L^{2}\left( 0,1\right) }
\end{equation*}%
\begin{equation*}
+M\int\limits_{0}^{t}e^{-\omega \left( t-\tau \right) }\left( t-\tau \right)
^{-1+\frac{\delta }{2}}\left\Vert h\left( \tau \right) \right\Vert _{H^{-1-%
\frac{p-2}{2p}}\left( 0,1\right) }d\tau
\end{equation*}%
\begin{equation*}
\leq Mt^{-\frac{1}{2}\left( 1-\frac{p-2}{2p}-\delta \right) }e^{-\omega
t}\left\Vert u_{1}\right\Vert _{L^{2}\left( 0,1\right)
}+c_{2}\int\limits_{0}^{t}e^{-\omega \left( t-\tau \right) }\left( t-\tau
\right) ^{-1+\frac{\delta }{2}}d\tau \leq \widehat{c}_{\delta },\text{ \ \ \
}\forall t\geq 1,
\end{equation*}%
where $\delta \in \left( 0,1-\frac{p-2}{2p}\right] $. Hence, we have%
\begin{equation}
\sup_{t\geq 1}\left\Vert u_{t}\left( t\right) \right\Vert _{H^{1-\varepsilon
}\left( 0,1\right) }\leq \widetilde{c}_{\varepsilon }\text{, \ for }%
\varepsilon \in \left( \frac{p-2}{2p},1\right] .  \tag{3.5}
\end{equation}

On the other hand, by (2.2)-(2.4) and (3.1), for any $T_{0}\geq 1$, there
exists a subsequence $\left\{ k_{m}\right\} _{m=1}^{\infty }$ such that $%
t_{k_{m}}\geq T_{0}$ and%
\begin{equation}
\left\{
\begin{array}{c}
S\left( t_{k_{m}}-T_{0}\right) \varphi _{k_{m}}\rightarrow \varphi _{0}\text{
weakly in }W_{0}^{1,p}\left( 0,1\right) \times L^{2}\left( 0,1\right) \text{,%
} \\
u_{m}\rightarrow u\text{ weakly star in }L^{\infty }\left( 0,\infty
;W_{0}^{1,p}\left( 0,1\right) \right) \text{, \ \ \ \ \ \ \ \ \ \ } \\
u_{mt}\rightarrow u_{t}\text{ weakly star in }L^{\infty }\left( 0,\infty
;L^{2}\left( 0,1\right) \right) \text{, \ \ \ \ \ \ \ \ \ \ \ \ } \\
u_{mt}\rightarrow u_{t}\text{ weakly in }L^{2}\left( 0,\infty
;H_{0}^{1}\left( 0,1\right) \right) \text{, \ \ \ \ \ \ \ \ \ \ \ \ \ \ \ \
\ \ } \\
u_{m}\left( t\right) \rightarrow u\left( t\right) \text{ weakly in }%
W_{0}^{1,p}\left( 0,1\right) \text{, }\forall t\geq 0\text{, \ \ \ \ \ \ \ \
\ \ \ \ \ } \\
u_{mt}(t)\rightarrow u_{t}(t)\text{ weakly in }L^{2}\left( 0,1\right) \text{%
, }\forall t\geq 0\text{, \ \ \ \ \ \ \ \ \ \ \ \ \ \ \ \ }%
\end{array}%
\right.   \tag{3.6}
\end{equation}%
for some $\varphi _{0}\in W_{0}^{1,p}\left( 0,1\right) \times L^{2}(0,1)$
and $u\in L^{\infty }\left( 0,\infty ;W_{0}^{1,p}\left( 0,1\right) \right)
\cap $ $W^{1,\infty }\left( 0,\infty ;L^{2}(0,1)\right) $, where $\left(
u_{m}(t),u_{mt}(t)\right) =S(t+t_{k_{m}}-T_{0})\varphi _{k_{m}}$. Now,
replacing $u$ in the equation (2.1)$_{1}$ with $u_{m}$ and $u_{n}$, and then
subtracting the obtained equations, we have the following equation:
\begin{equation*}
u_{mtt}(t,x)-u_{ntt}(t,x)-(u_{mtxx}(t,x)-u_{ntxx}(t,x))
\end{equation*}%
\begin{equation*}
-\frac{\partial }{\partial x}(\left\vert u_{mx}(t,x)\right\vert
^{p-2}u_{mx}(t,x)-\left\vert u_{nx}(t,x)\right\vert ^{p-2}u_{nx}(t,x))
\end{equation*}%
\begin{equation}
+f\left( u_{m}(t,x)\right) -f\left( u_{n}(t,x)\right) =0.  \tag{3.7}
\end{equation}%
Testing the equation (3.7) with $2t\left( u_{m}-u_{n}\right) $ in $\left(
0,T\right) \times \left( 0,1\right) $ and considering the inequality%
\begin{equation}
\left( \left\vert x\right\vert ^{p-2}x-\left\vert y\right\vert
^{p-2}y\right) \left( x-y\right) \geq c\left\vert x-y\right\vert ^{p},
\tag{3.8}
\end{equation}%
we find%
\begin{equation*}
T\left\Vert u_{mx}\left( T\right) -u_{nx}\left( T\right) \right\Vert
_{L^{2}\left( 0,1\right) }^{2}+c_{3}\int\limits_{0}^{T}t\left\Vert
u_{mx}\left( t\right) -u_{nx}\left( t\right) \right\Vert _{L^{p}\left(
0,1\right) }^{p}dt
\end{equation*}%
\begin{equation*}
\leq \left\Vert u_{m}\left( T\right) -u_{n}\left( T\right) \right\Vert
_{L^{2}\left( 0,1\right) }^{2}+2\int\limits_{0}^{T}t\left\Vert u_{mt}\left(
t\right) -u_{nt}\left( t\right) \right\Vert _{L^{2}\left( 0,1\right) }^{2}dt
\end{equation*}%
\begin{equation*}
-2T\int\limits_{0}^{1}\left( u_{mt}\left( T,x\right) -u_{nt}\left(
T,x\right) \right) \left( u_{m}\left( T,x\right) -u_{n}\left( T,x\right)
\right) dx+\int\limits_{0}^{T}\left\Vert u_{mx}\left( t\right) -u_{nx}\left(
t\right) \right\Vert _{L^{2}\left( 0,1\right) }^{2}dt
\end{equation*}%
\begin{equation}
-2\int\limits_{0}^{T}\int\limits_{0}^{1}(f\left( u_{m}(t,x)\right) -f\left(
u_{n}(t,x)\right) )t\left( u_{m}\left( t,x\right) -u_{n}\left( t,x\right)
\right) dxdt.  \tag{3.9}
\end{equation}%
Now, considering the fourth term on the right side of (3.9), we get
\begin{equation*}
\int\limits_{0}^{T}\left\Vert u_{mx}\left( t\right) -u_{nx}\left( t\right)
\right\Vert _{L^{2}\left( 0,1\right) }^{2}dt\leq
c_{4}+\int\limits_{1}^{T}\left\Vert u_{mx}\left( t\right) -u_{nx}\left(
t\right) \right\Vert _{L^{2}\left( 0,1\right) }^{2}dt
\end{equation*}%
\begin{equation*}
\leq c_{4}+c_{3}\int\limits_{Q}t\left\Vert u_{mx}\left( t\right)
-u_{nx}\left( t\right) \right\Vert _{L^{2}\left( 0,1\right)
}^{p}dt+\int\limits_{1}^{T}\left( \frac{1}{c_{3}t}\right) ^{\frac{2}{p-2}}dt
\end{equation*}%
\begin{equation}
\leq c_{4}+c_{3}\int\limits_{0}^{T}t\left\Vert u_{mx}\left( t\right)
-u_{nx}\left( t\right) \right\Vert _{L^{p}\left( 0,1\right)
}^{p}dt+c_{5}\left( T^{\max \{0,\frac{p-4}{p-2}\}}+\ln (T)\right) ,\text{ \
\ }\forall T\geq 1,  \tag{3.10}
\end{equation}%
where $Q=\left\{ t\in \left( 0,T\right) :\left\Vert u_{mx}\left( t\right)
-u_{nx}\left( t\right) \right\Vert _{L^{2}\left( 0,1\right) }^{p-2}\geq
\frac{1}{c_{3}t}\right\} .$ By considering (3.10) in (3.9) and using (3.1),
(3.5) and (3.6), we have
\begin{equation*}
\limsup\limits_{m\rightarrow \infty }\limsup\limits_{n\rightarrow \infty
}\left\Vert u_{mx}\left( T\right) -u_{nx}\left( T\right) \right\Vert
_{L^{2}\left( 0,1\right) }^{2}\leq \frac{c_{6}\left( T^{\max \{0,\frac{p-4}{%
p-2}\}}+\ln (T)\right) }{T},\text{ }\ \text{ }\forall T\geq 1.
\end{equation*}%
Choosing $T=T_{0}$ in the above inequality, we obtain%
\begin{equation*}
\liminf\limits_{k\rightarrow \infty }\liminf\limits_{n\rightarrow \infty
}\left\Vert PS\left( t_{k}\right) \varphi _{k}-PS\left( t_{n}\right) \varphi
_{n}\right\Vert _{H_{0}^{1}\left( 0,1\right) }\text{ }\leq \frac{c_{7}\left(
T_{0}^{\max \{0,\frac{p-4}{p-2}\}}+\ln (T_{0})\right) ^{\frac{1}{2}}}{T_{0}^{%
\frac{1}{2}}},\text{ \ \ }\forall T_{0}\geq 1\text{, }
\end{equation*}%
and passing to limit as $T_{0}\rightarrow \infty $, we get%
\begin{equation*}
\liminf\limits_{k\rightarrow \infty }\liminf\limits_{n\rightarrow \infty
}\left\Vert PS\left( t_{k}\right) \varphi _{k}-PS\left( t_{n}\right) \varphi
_{n}\right\Vert _{H_{0}^{1}\left( 0,1\right) }=0\text{,}
\end{equation*}%
where $P:W_{0}^{1,p}\left( 0,1\right) \times L^{2}\left( 0,1\right)
\rightarrow W_{0}^{1,p}\left( 0,1\right) $ is the projector defined by $%
P\left( \varphi ,\psi \right) =\varphi $. Also, it can be immediately seen
that for every subsequence $\left\{ k_{m}\right\} _{m=1}^{\infty }$, the
following holds:%
\begin{equation}
\liminf\limits_{m\rightarrow \infty }\liminf\limits_{n\rightarrow \infty
}\left\Vert PS\left( t_{k_{m}}\right) \varphi _{k_{m}}-PS\left(
t_{k_{n}}\right) \varphi _{k_{n}}\right\Vert _{H_{0}^{1}\left( 0,1\right)
}=0.  \tag{3.11}
\end{equation}%
Now, we conclude that the sequence $\left\{ PS\left( t_{k}\right) \varphi
_{k}\right\} _{k=1}^{\infty }$ has a convergent subsequence in $%
H_{0}^{1}\left( 0,1\right) $. If we assume the contrary, then, by the
completeness of $H_{0}^{1}\left( 0,1\right) $, there exist $\varepsilon
_{0}>0$ and a subsequence $\left\{ k_{m}\right\} _{m=1}^{\infty }$ such that%
\begin{equation*}
\left\Vert PS\left( t_{k_{m}}\right) \varphi _{k_{m}}-PS\left(
t_{k_{n}}\right) \varphi _{k_{n}}\right\Vert _{H_{0}^{1}\left( 0,1\right)
}\geq \varepsilon _{0},\text{ }m\neq n,
\end{equation*}%
which contradicts (3.11). Hence, together with (3.5), we complete the proof
of the lemma.
\end{proof}

Now, we define weak $\omega $-limit set of the trajectories emanating from a
set $B\subset W_{0}^{1,p}\left( 0,1\right) \times L^{2}\left( 0,1\right) $
as follows:%
\begin{equation*}
\omega _{w}\left( B\right) :=\bigcap\limits_{\tau \geq 0}\overline{%
\bigcup\limits_{t\geq \tau }S\left( t\right) B}^{w},
\end{equation*}%
where the bar over a set means weak closure in $W_{0}^{1,p}\left( 0,1\right)
\times L^{2}\left( 0,1\right) $. It can be easily shown that $\varphi \in
\omega _{w}\left( B\right) $ if and only if there exist sequences $\left\{
t_{k}\right\} _{k=1}^{\infty }$, $t_{k}\rightarrow \infty $ and $\left\{
\varphi _{k}\right\} _{k=1}^{\infty }\subset B$ such that $S\left(
t_{k}\right) \varphi _{k}\rightarrow \varphi $ weakly in $W_{0}^{1,p}\left(
0,1\right) \times L^{2}\left( 0,1\right) $. Moreover, we state the following
invariance property of the set $\omega _{w}\left( B\right) $:

\begin{lemma}
For any bounded set $B\subset W_{0}^{1,p}\left( 0,1\right) \times
L^{2}\left( 0,1\right) $, the set $\omega _{w}\left( B\right) $ is invariant.
\end{lemma}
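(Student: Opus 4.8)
The plan is to establish both inclusions $S(t)\omega_w(B)\subseteq\omega_w(B)$ and $\omega_w(B)\subseteq S(t)\omega_w(B)$ for each fixed $t\geq 0$, and the engine for both is a weak sequential continuity property of the semigroup along orbits issuing from $B$. Concretely, I would first isolate the following fact: \emph{if $\psi_k=S(s_k)\varphi_k$ with $\varphi_k\in B$, $s_k\to\infty$, and $\psi_k\rightharpoonup\psi$ weakly in $W_0^{1,p}(0,1)\times L^2(0,1)$, then $S(t)\psi_k\rightharpoonup S(t)\psi$ weakly in the same space for every $t\geq 0$.} This is the limit-passage of the same kind as the one producing (3.6) in the proof of Lemma~3.1: setting $w_k(\cdot)=S(\cdot)\psi_k$, the uniform bound (3.1) lets me extract, along a subsequence, the weak and weak-star convergences of the type listed in (3.6); passing to the limit in the weak formulation of Definition~2.1 identifies the limit as a weak solution with initial data $\psi$, and the uniqueness part of Theorem~2.1 forces this limit to be $S(\cdot)\psi$. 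In particular $w_k(t)=S(t)\psi_k\rightharpoonup S(t)\psi$, and since the limit is uniquely determined the whole sequence converges.

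The delicate point, and the one I expect to be the main obstacle, is the passage to the limit in the two nonlinear terms. For $f(u_k)$ I would use that in one space dimension $W_0^{1,p}(0,1)\hookrightarrow\hookrightarrow C[0,1]$, so that the bounds on $u_k$ in $W_0^{1,p}$ and on $u_{kt}$ in $L^2$, together with an Aubin--Lions argument, yield strong convergence $u_k\to u$ in $C([0,T]\times[0,1])$ and hence $f(u_k)\to f(u)$. The genuinely degenerate difficulty is the $p$-Laplacian flux: $|u_{kx}|^{p-2}u_{kx}$ is only bounded in $L^{p/(p-1)}(0,1)$, so it converges weakly to some $\chi$, and I must show $\chi=|u_x|^{p-2}u_x$. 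Here I would invoke the monotonicity inequality (3.8) together with a Minty--Browder type argument, controlling $\limsup_k\int_0^T\!\!\int_0^1|u_{kx}|^{p-2}u_{kx}u_{kx}\,dx\,dt$ by means of the energy inequality (2.4); the absence of a non-degenerate $-\Delta u$ term is precisely what makes this identification nontrivial.

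Granting the weak continuity, the first inclusion is immediate. Let $\varphi\in\omega_w(B)$, so that $S(t_k)\varphi_k\rightharpoonup\varphi$ with $t_k\to\infty$ and $\varphi_k\in B$. Fixing $t\geq 0$ and applying the continuity fact with $\psi_k=S(t_k)\varphi_k$, $s_k=t_k$, gives
\begin{equation*}
S(t+t_k)\varphi_k=S(t)\bigl(S(t_k)\varphi_k\bigr)\rightharpoonup S(t)\varphi,
\end{equation*}
and since $t+t_k\to\infty$ this exhibits $S(t)\varphi\in\omega_w(B)$.

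For the reverse inclusion, again take $\varphi\in\omega_w(B)$ via $S(t_k)\varphi_k\rightharpoonup\varphi$, and fix $t\geq 0$; for all large $k$ we have $t_k>t$. By (3.1) the sequence $\{S(t_k-t)\varphi_k\}$ is bounded, so along a subsequence $S(t_k-t)\varphi_k\rightharpoonup\psi$, and because $t_k-t\to\infty$ we get $\psi\in\omega_w(B)$. Applying the weak continuity fact once more, now with $\psi_k=S(t_k-t)\varphi_k$, $s_k=t_k-t$, yields
\begin{equation*}
S(t_k)\varphi_k=S(t)\bigl(S(t_k-t)\varphi_k\bigr)\rightharpoonup S(t)\psi
\end{equation*}
along that subsequence; comparing with $S(t_k)\varphi_k\rightharpoonup\varphi$ and using uniqueness of weak limits gives $\varphi=S(t)\psi\in S(t)\omega_w(B)$. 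Combining the two inclusions proves $S(t)\omega_w(B)=\omega_w(B)$ for every $t\geq 0$, which is the asserted invariance.
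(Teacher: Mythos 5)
Your deductive skeleton (two inclusions, each driven by a weak sequential continuity property of $S(t)$ along orbits emanating from $B$) is exactly the paper's, and the two inclusion arguments at the end are fine. The divergence, and the problem, is in how you propose to prove the continuity property. The paper does not pass to the limit in the PDE at all: it combines Lemma~3.1, which gives \emph{strong} convergence of a subsequence of $\{S(s_k)\varphi_k\}$ in $H_0^1(0,1)\times L^2(0,1)$, with the Lipschitz continuous-dependence estimate (2.5) in $H_0^1\times H^{-1}$ and the uniform bound (3.1); this immediately yields $S(t)S(s_{k_m})\varphi_{k_m}\rightharpoonup S(t)\psi$ in $W_0^{1,p}\times L^2$ with no need to identify the limit of the $p$-Laplacian flux. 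Note that (2.5) alone would not suffice: weak convergence of $\psi_k$ in $W_0^{1,p}\times L^2$ gives strong convergence only in $C[0,1]\times H^{-1}$, not in $H_0^1\times H^{-1}$, so the strong $H_0^1$ convergence supplied by Lemma~3.1 is the indispensable ingredient.

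Your alternative --- a direct limit passage with a Minty--Browder identification of $\chi$ --- has a concrete gap at the step you yourself flag as critical. The inequality you need is $\limsup_k\int_0^T\!\!\int_0^1|u_{kx}|^p\,dx\,dt\le\int_0^T\!\!\int_0^1\chi u_x\,dx\,dt$, and the energy inequality (2.4) cannot deliver it: (2.4) controls $\int\|u_{ktx}\|_{L^2}^2$, not $\int\!\!\int|u_{kx}|^p$, and its right-hand side at time zero contains $\tfrac1p\|u_{kx}(0)\|_{L^p}^p$, which under weak convergence of the initial data is only \emph{lower} semicontinuous --- the wrong direction. The quantity $\int_0^T\!\!\int_0^1|u_{kx}|^p$ is instead produced by testing the equation with $u_k$ (as the paper does in Lemma~4.2 for a different purpose), and the resulting identity contains the term $+\tfrac12\|u_{kx}(0)\|_{L^2(0,1)}^2$; to close the $\limsup$ estimate you must show $\limsup_k\|u_{kx}(0)\|_{L^2}^2\le\|u_x(0)\|_{L^2}^2$, which again requires strong $H_0^1$ convergence of the initial data $P\psi_k\to P\psi$ --- i.e.\ precisely the conclusion of Lemma~3.1 that the paper invokes directly (together with strong $L^2(0,T;L^2)$ convergence of $u_{kt}$, obtainable from (3.5) and Aubin--Lions). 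So your argument is repairable, but only by importing the same compactness lemma the paper uses, at which point the short route through (2.5) makes the whole Minty machinery unnecessary.
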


\begin{proof}
Let $\psi \in \omega _{w}\left( B\right) $ and $z=S\left( t\right) \psi $
for $t\geq 0$. Then, by the definition of $\omega _{w}\left( B\right) $,
there exist the sequences $\left\{ t_{k}\right\} _{k=1}^{\infty }$, $%
t_{k}\rightarrow \infty $ and $\left\{ \psi _{k}\right\} _{k=1}^{\infty
}\subset B$ such that $S\left( t_{k}\right) \psi _{k}\rightarrow \psi $
weakly in $W_{0}^{1,p}\left( 0,1\right) \times L^{2}\left( 0,1\right) $.
Also, by Lemma 3.1, there exists a subsequence $\left\{ k_{m}\right\}
_{m=1}^{\infty }$ such that $PS\left( t_{k_{m}}\right) \psi
_{k_{m}}\rightarrow P\psi $ strongly in $H_{0}^{1}\left( 0,1\right) $.
Therefore, setting $\tau _{k_{m}}:=t+t_{k_{m}}$, by (2.5), we have
\begin{equation*}
S\left( \tau _{k_{m}}\right) \psi _{k_{m}}=S\left( t\right) S\left(
t_{k_{m}}\right) \psi _{k_{m}}\rightarrow S\left( t\right) \psi =z\text{ \
weakly in }W_{0}^{1,p}\left( 0,1\right) \times L^{2}\left( 0,1\right) ,
\end{equation*}%
which yields that $z\in \omega _{w}\left( B\right) $. Hence, we have that $%
S\left( t\right) \omega _{w}\left( B\right) \subset \omega _{w}\left(
B\right) $.

On the other hand, if $\psi \in \omega _{w}\left( B\right) $, then there
exist $\left\{ t_{k}\right\} _{k=1}^{\infty }$, $t_{k}\rightarrow \infty $
and $\left\{ \psi _{k}\right\} _{k=1}^{\infty }\subset B$ such that $S\left(
t_{k}\right) \psi _{k}\rightarrow \psi $ weakly in $W_{0}^{1,p}\left(
0,1\right) \times L^{2}\left( 0,1\right) $. Now, define $\varphi
_{k}=S\left( t_{k}-t\right) \psi _{k}$, for $t_{k}\geq t\geq 0$. By (3.1),
there exists a subsequence $\left\{ k_{m}\right\} _{m=1}^{\infty }$ such
that $\varphi _{k_{m}}\rightarrow \varphi $ weakly in $W_{0}^{1,p}\left(
0,1\right) \times L^{2}\left( 0,1\right) $ for some $\varphi \in
W_{0}^{1,p}\left( 0,1\right) \times L^{2}\left( 0,1\right) $, which gives
that $\varphi \in \omega _{w}\left( B\right) $. Moreover, by Lemma 3.1,
passing to a subsequence, we have $P\varphi _{k_{m_{n}}}\rightarrow P\varphi
$ strongly in $H_{0}^{1}\left( 0,1\right) .$ Since
\begin{equation*}
S\left( t_{k_{m_{n}}}\right) \psi _{k_{m_{n}}}=S\left( t\right) S\left(
t_{k_{m_{n}}}-t\right) \psi _{k_{m_{n}}}=S\left( t\right) \varphi
_{k_{m_{n}}},
\end{equation*}%
applying (2.5), we observe that $S\left( t_{k_{m_{n}}}\right) \psi
_{k_{m_{n}}}\rightarrow S\left( t\right) \varphi $ weakly in $%
W_{0}^{1,p}\left( 0,1\right) \times L^{2}\left( 0,1\right) .$ Therefore, we
conclude that $\psi =S\left( t\right) \varphi $, which gives that $\omega
_{w}\left( B\right) \subset $ $S\left( t\right) \omega _{w}\left( B\right) $.
\end{proof}

Thus, Lemma 3.1 and Lemma 3.2 imply Theorem 2.2.

\section{Regular strong global attractor}

We begin with the following regularity result.

\begin{lemma}
In addition to the conditions (2.2)-(2.3), assume that $p<4$ and $B$ is a
bounded subset of $W_{0}^{1,p}\left( 0,1\right) \times L^{2}\left(
0,1\right) $. Then, the set $\omega _{w}\left( B\right) $ is bounded in $%
W^{1,\infty }\left( 0,1\right) \times W^{1,\infty }\left( 0,1\right) $.
\end{lemma}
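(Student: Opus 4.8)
The plan is to exploit the invariance of $\omega_w(B)$ to reduce the claim to a uniform a priori estimate on complete bounded trajectories, and then to bootstrap regularity by combining the parabolic smoothing of the strong damping with the one dimensional structure of the equation. First I would fix $\psi\in\omega_w(B)$ and use Lemma 3.2 to produce a complete trajectory $\{(u(t),u_t(t))\}_{t\in\mathbb{R}}$ lying in $\omega_w(B)$, along which the bounds (3.1) and (3.5) hold uniformly in $t$. Since $p>2$, the embeddings $W_0^{1,p}(0,1)\hookrightarrow C[0,1]$ and (choosing $\varepsilon<\tfrac12$ in (3.5), which is admissible because $\tfrac{p-2}{2p}<\tfrac12$) $H^{1-\varepsilon}(0,1)\hookrightarrow C[0,1]$ already give $u,\,u_t\in L^\infty(0,1)$ uniformly, so that $f(u)$ is uniformly bounded and the problem is reduced to controlling $u_x$ and $u_{tx}$ in $L^\infty(0,1)$, uniformly in $t\in\mathbb{R}$.

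Next I would upgrade the time regularity of the trajectory. Writing $v=u_t$ and differentiating (2.1) in $t$, the function $q=u_{tt}$ solves a linear parabolic equation $q_t+Aq=\partial_x\big((p-1)|u_x|^{p-2}u_{tx}\big)-f'(u)u_t$, and testing this with $q$ (using $q=0$ on the boundary, since $u_t$ vanishes there, together with Poincar\'e's inequality) yields a differential inequality for $\|q\|_{L^2}^2$ with dissipation $\|q_x\|_{L^2}^2$. The only dangerous term is $\int|u_x|^{p-2}u_{tx}q_x\,dx$, and it is precisely here that the restriction $p<4$ enters: since $\|u_x\|_{L^p(0,1)}$ is bounded, $|u_x|^{2(p-2)}$ is integrable to the power $\tfrac{p}{2(p-2)}$, which is $\ge1$ exactly when $p\le4$, so H\"older's inequality controls this term by $C\|u_{tx}\|_{L^{r}}\|q_x\|_{L^2}$ with $r=\tfrac{2p}{4-p}<\infty$. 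Interpolating $\|u_{tx}\|_{L^r}$ via the one dimensional Gagliardo--Nirenberg inequality between the bounded quantity $\|u_t\|_{L^2}$ and $\|u_{txx}\|_{L^2}$, absorbing $\|q_x\|_{L^2}$ by Young's inequality, and feeding the resulting bound back through the smoothing estimate (3.4) applied to $v_t+Av=h$, the aim is to close a uniform bound $\sup_t\|u_{tt}(t)\|_{L^2}<\infty$ and hence $\sup_t\|u_t(t)\|_{H^2(0,1)}<\infty$. Since $H^2(0,1)\hookrightarrow W^{1,\infty}(0,1)$ in one dimension, this delivers the bound on the second component, $\sup_t\|u_{tx}(t)\|_{L^\infty}<\infty$.

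Finally I would recover the spatial regularity of $u$ from the equation itself. Integrating (2.1) in $x$ gives the identity
\[
|u_x(t,x)|^{p-2}u_x(t,x)+u_{tx}(t,x)=|u_x(t,0)|^{p-2}u_x(t,0)+u_{tx}(t,0)+\int_0^x\big(u_{tt}+f(u)-g\big)\,d\xi,
\]
whose right--hand side is bounded in $L^\infty(0,1)$ once $u_{tt}\in L^2(0,1)$, $f(u)\in L^\infty(0,1)$ and $g\in L^2(0,1)$ are known, the boundary values being controlled by averaging the identity over $x\in(0,1)$ and using $\int_0^1 u_{tx}\,dx=0$. Subtracting the already bounded term $u_{tx}$ shows $|u_x|^{p-2}u_x\in L^\infty(0,1)$ uniformly, and since $s\mapsto|s|^{p-2}s$ is a homeomorphism of $\mathbb{R}$, this is equivalent to $u_x\in L^\infty(0,1)$ uniformly, i.e. $\sup_t\|u(t)\|_{W^{1,\infty}}<\infty$. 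As every constant depends only on the $W_0^{1,p}\times L^2$--bound of $B$, the estimates are uniform over $\psi\in\omega_w(B)$, giving the asserted boundedness in $W^{1,\infty}(0,1)\times W^{1,\infty}(0,1)$. The main obstacle I anticipate is closing the bootstrap in the second step: the $p$-Laplacian is degenerate and the terms $u_{txx}$ and $\partial_x(|u_x|^{p-2}u_x)$ are coupled at the level of second derivatives, so the energy estimate for $u_{tt}$ and the smoothing estimate feed into one another. Making this loop contractive---which is exactly where both $p<4$ and the one dimensional embeddings are essential---is the delicate point, and is likely to require iterating the fractional smoothing estimate (3.4), or a continuation-in-time argument, rather than a single application.
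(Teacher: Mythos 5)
Your overall architecture --- reduce to a complete trajectory in $\omega_w(B)$ via invariance, gain regularity for $u_t$ and $u_{tt}$ from the parabolic smoothing of $v_t+Av=h$, and then read off the boundedness of $u_x$ from the spatially integrated equation $|u_x|^{p-2}u_x+u_{tx}=\int_0^x\left(u_{tt}+f(u)-g\right)d\xi+\mathrm{const}$ --- matches the paper's up to the decisive step. The gap is in how you separate $u_{tx}$ from $|u_x|^{p-2}u_x$ in that identity. You propose to first prove $\sup_t\|u_{tt}(t)\|_{L^2}<\infty$ and $\sup_t\|u_t(t)\|_{H^2}<\infty$ by an energy estimate for $q=u_{tt}$, so that $u_{tx}\in L^\infty$ can be subtracted off. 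That bootstrap does not close: the dangerous term $\int|u_x|^{p-2}u_{tx}q_x\,dx$ is estimated by $C\|u_{tx}\|_{L^r}\|q_x\|_{L^2}$ with $r=\frac{2p}{4-p}$, and your interpolation of $\|u_{tx}\|_{L^r}$ brings in $\|u_{txx}\|_{L^2}$, which is neither known to be finite nor absorbable by the dissipation $\|q_x\|_{L^2}^2=\|u_{ttx}\|_{L^2}^2$ produced by the $q$-equation (these are different third derivatives). More fundamentally, $u_{tt}=u_{txx}+\partial_x\left(|u_x|^{p-2}u_x\right)-f(u)+g$ can be placed in $L^2$ only if the combination $u_{txx}+\partial_x\left(|u_x|^{p-2}u_x\right)$ is, and controlling the second summand requires exactly the $W^{1,\infty}$ bound on $u$ you are trying to prove; the argument is circular. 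Indeed the paper never obtains $u_{tt}\in L^2$: two applications of the smoothing estimate (3.4) along the complete trajectory (letting $s\to-\infty$) give only that $u_{tt}(t)$ is bounded in $H^{-\varepsilon}(0,1)$ for $\varepsilon>\frac{p-2}{p}$, and this is where $p<4$ actually enters --- it makes $\varepsilon<\frac12$ admissible, so that by Lemma A.1 the antiderivative $w_t+|w|^{p-2}w$ (with $w(t,x)=u_x(t-1,x)$) is continuous and uniformly bounded. Your use of $p<4$ as a H\"older-exponent condition in the energy estimate is a different, and ultimately not load-bearing, occurrence of the hypothesis.

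The missing idea is the paper's Lemma A.2: once one knows only that $\left|w_t(t,x)+|w(t,x)|^{p-2}w(t,x)\right|\le c$ for all $(t,x)$, one regards this, for each fixed $x$, as a differential inequality in $t$ for the scalar function $t\mapsto w(t,x)$. Because $p>2$ the damping $|w|^{p-2}w$ is superlinear, so after a fixed waiting time $s_0$ the solution enters an absorbing interval whose size depends only on $c$ and not on the (unknown, possibly large) initial value of $w$; this bounds $w=u_x$ and then $w_t=u_{tx}$ pointwise, simultaneously, with no need for any prior $L^\infty$ control of $u_{tx}$. Since the trajectory is complete, the waiting time costs nothing. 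Without this ODE-in-time mechanism (or a substitute for it), the split of the bounded sum $u_{tx}+|u_x|^{p-2}u_x$ into two separately bounded pieces is not justified, and the proof as proposed does not go through.
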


\begin{proof}
Let $\left( u_{0},u_{1}\right) \in \omega _{w}\left( B\right) $. By the
invariance of $\omega _{w}\left( B\right) $, it follows that there exists an
invariant trajectory (see \cite[p. 157]{20}) $\left\{ \left( u\left(
t\right) ,u_{t}\left( t\right) \right) :t\in
\mathbb{R}
\right\} \subset \omega _{w}\left( B\right) $ such that
\begin{equation}
\left( u\left( 0\right) ,u_{t}\left( 0\right) \right) =\left(
u_{0},u_{1}\right) .  \tag{4.1}
\end{equation}%
Since $u$ is the solution of (2.1)$_{1}$, by (2.2)-(2.4), we have%
\begin{equation}
\left\Vert \left( u\left( t\right) ,u_{t}\left( t\right) \right) \right\Vert
_{W_{0}^{1,p}\left( 0,1\right) \times L^{2}\left( 0,1\right)
}+\int\limits_{s}^{t}\left\Vert u_{tx}\left( \tau \right) \right\Vert
_{L^{2}\left( 0,1\right) }^{2}d\tau \leq c_{1}\text{, \ \ }\forall t\geq s,
\tag{4.2}
\end{equation}%
where the constant $c_{1}$ depends on $\omega _{w}\left( B\right) $ and is
independent of the trajectory $\left\{ \left( u\left( t\right) ,u_{t}\left(
t\right) \right) :t\in
\mathbb{R}
\right\} $.

Now, denoting $v\left( t\right) :=u_{t}\left( t\right) $ as in the proof of
Lemma 3.1, by (2.1)$_{1}$, we have%
\begin{equation}
v_{t}+Av=h.  \tag{4.3}
\end{equation}%
Hence, similar to (3.5), we have%
\begin{equation*}
\left\Vert v\left( t\right) \right\Vert _{H^{1-\frac{p-2}{2p}-\delta }\left(
0,1\right) }\leq c_{2}\left( \left( t-s\right) ^{-\frac{1}{2}\left( 1-\frac{%
p-2}{2p}-\delta \right) }+1\right) \text{, \ \ }\forall t\geq s,
\end{equation*}%
where $\delta \in \left( 0,1-\frac{p-2}{2p}\right] $. Passing to the limit
as $s\rightarrow -\infty $ in the above inequality, we obtain%
\begin{equation}
\left\Vert u_{tx}\left( t\right) \right\Vert _{H^{-\varepsilon }\left(
0,1\right) }\leq c_{3}\text{, \ }\forall t\in
\mathbb{R}
,  \tag{4.4}
\end{equation}%
where $\varepsilon \in \left( \frac{p-2}{2p},1\right] $ and the constant $%
c_{3}$, as the previous constants, is independent of the trajectory $\left\{
\left( u\left( t\right) ,u_{t}\left( t\right) \right) :t\in
\mathbb{R}
\right\} $.

Similarly, denoting $\widehat{v}\left( t\right) :=v_{t}\left( t\right) $,
from (4.3), we have%
\begin{equation}
\widehat{v}\left( t\right) =e^{-A\left( t-s\right) }\widehat{v}\left(
s\right) +\int\limits_{s}^{t}e^{-A\left( t-\tau \right) }h^{\prime }\left(
\tau \right) d\tau ,\text{ \ \ }\forall t\geq s.  \tag{4.5}
\end{equation}%
By (4.2), it is easy to verify that%
\begin{equation}
\left\Vert h^{\prime }\left( t\right) \right\Vert _{H^{-1-\frac{p-2}{p}%
}\left( 0,1\right) }\leq c_{4}\left\Vert u_{tx}\left( t\right) \right\Vert
_{L^{2}\left( 0,1\right) },\text{ \ \ }\forall t\in
\mathbb{R}
.  \tag{4.6}
\end{equation}%
So, applying (3.4) to (4.5) and considering (4.2) and (4.6), we get%
\begin{equation*}
\left\Vert \widehat{v}\left( t\right) \right\Vert _{H^{-\frac{p-2}{p}-\delta
}\left( 0,1\right) }\leq c_{5}\left( t-s\right) ^{-1+\frac{1}{2}\left( \frac{%
p-2}{p}+\delta \right) }\left\Vert \widehat{v}\left( s\right) \right\Vert
_{D(A^{-1})}
\end{equation*}%
\begin{equation*}
+c_{5}\int\limits_{s}^{t}e^{-\omega \left( t-\tau \right) }\left( t-\tau
\right) ^{-\frac{1-\delta }{2}}\left\Vert u_{tx}\left( \tau \right)
\right\Vert _{L^{2}\left( 0,1\right) }d\tau
\end{equation*}%
\begin{equation*}
\leq c_{5}\left( t-s\right) ^{-1+\frac{1}{2}\left( \frac{p-2}{p}+\delta
\right) }\left\Vert u_{tt}\left( s\right) \right\Vert _{D(A^{-1})}
\end{equation*}%
\begin{equation*}
+c_{5}\left( \int\limits_{s}^{t}\left\Vert u_{tx}\left( \tau \right)
\right\Vert _{L^{2}\left( 0,1\right) }^{2}d\tau \right) ^{\frac{1}{2}}\left(
\int\limits_{s}^{t}e^{-2\omega \left( t-\tau \right) }\left( t-\tau \right)
^{-\left( 1-\delta \right) }d\tau \right) ^{\frac{1}{2}}
\end{equation*}%
\begin{equation*}
\leq c_{6}\left( \left( t-s\right) ^{-1+\frac{1}{2}\left( \frac{p-2}{p}%
+\delta \right) }+1\right) ,\text{\ \ \ }\forall t\geq s,
\end{equation*}%
where $\delta \in (0,1]$ and the constant $c_{6}$, as the previous constants
$c_{i}$ ($i=\overline{1,5}$), is independent of the trajectory $\left\{
\left( u\left( t\right) ,u_{t}\left( t\right) \right) :t\in
\mathbb{R}
\right\} $. Passing to the limit as $s\rightarrow -\infty $ in the last
inequality, we obtain%
\begin{equation}
\left\Vert u_{tt}\left( t\right) \right\Vert _{H^{-\varepsilon }\left(
0,1\right) }\leq c_{6},\text{ \ \ }\forall t\in
\mathbb{R}
,  \tag{4.7}
\end{equation}%
where $\varepsilon \in (\frac{p-2}{p},\frac{p-2}{p}+1]$.

Now, denoting $w\left( t,x\right) :=u_{x}\left( t-1,x\right) $, by (2.1)$%
_{1} $, we find%
\begin{equation}
\frac{\partial }{\partial x}\left( w_{t}\left( t,x\right) +\left\vert
w\left( t,x\right) \right\vert ^{p-2}w\left( t,x\right) \right) =\kappa
\left( t,x\right) ,  \tag{4.8}
\end{equation}%
where $\kappa \left( t,x\right) :=u_{tt}\left( t-1,x\right) +f\left( u\left(
t-1,x\right) \right) -g\left( x\right) $. Choosing $\varepsilon \in \left(
\frac{p-2}{p},\frac{1}{2}\right) $, by (4.2), (4.4) and (4.7), we have%
\begin{equation*}
\left( w_{t}+\left\vert w\right\vert ^{p-2}w\right) \in L^{\infty }\left(
\mathbb{R}
;H^{-\varepsilon }\left( 0,1\right) \right) \text{ and }\kappa \in L^{\infty
}\left(
\mathbb{R}
;H^{-\varepsilon }\left( 0,1\right) \right) .
\end{equation*}%
Hence, applying Lemma A.1, by (4.8), we obtain that $\left( w_{t}+\left\vert
w\right\vert ^{p-2}w\right) \in L^{\infty }\left(
\mathbb{R}
;C\left[ 0,1\right] \right) $ and%
\begin{equation}
\left\vert w_{t}\left( t,x\right) +\left\vert w\left( t,x\right) \right\vert
^{p-2}w\left( t,x\right) \right\vert \leq \widehat{\kappa }\left( t\right)
\text{, \ \ }\forall \left( t,x\right) \in
\mathbb{R}
\times \left[ 0,1\right] ,  \tag{4.9}
\end{equation}%
where $\widehat{\kappa }\left( t\right) :=\left\Vert w_{t}\left( t\right)
+\left\vert w\left( t\right) \right\vert ^{p-2}w\left( t\right) \right\Vert
_{H^{-\varepsilon }\left( 0,1\right) }+\left\Vert \kappa \left( t\right)
\right\Vert _{H^{-\varepsilon }\left( 0,1\right) }$. As mentioned above, by
(4.2), (4.4) and (4.7), it follows that%
\begin{equation*}
\widehat{\kappa }\left( t\right) \leq c_{7}\text{, \ }\forall t\in
\mathbb{R}
.
\end{equation*}%
Therefore, applying Lemma A.2 to (4.9), we get%
\begin{equation*}
\left\vert w_{t}\left( t,\cdot \right) \right\vert +\left\vert w\left(
t,\cdot \right) \right\vert \leq c_{8}\text{,\ \ \ a.e. \ in \ }\left[ 0,1%
\right] ,
\end{equation*}%
for every $t\in \left[ s_{0},1\right] $, where $s_{0}\in (\frac{2}{p},1)$.
Choosing $t=1$ in the last inequality, by (4.1) and the definition of $%
w\left( t,x\right) $, we obtain
\begin{equation*}
\left\Vert u_{0x}\right\Vert _{L^{\infty }\left( 0,1\right) }+\left\Vert
u_{1x}\right\Vert _{L^{\infty }\left( 0,1\right) }\leq c_{8}.
\end{equation*}%
The last inequality completes the proof, because the constant $c_{8}$ is
independent of $\left( u_{0},u_{1}\right) $.
\end{proof}

Now, we are in a position to prove the following asymptotic compactness
result.

\begin{lemma}
Let the conditions of Lemma 4.1 hold. Then every sequence of the form $%
\left\{ S\left( t_{k}\right) \varphi _{k}\right\} _{k=1}^{\infty }$, where $%
\left\{ \varphi _{k}\right\} _{k=1}^{\infty }\subset B$, $t_{k}\rightarrow
\infty $, has a convergent subsequence in $W_{0}^{1,p}\left( 0,1\right)
\times L^{2}\left( 0,1\right) $.
\end{lemma}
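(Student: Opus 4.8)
The goal is to upgrade the weak $\omega$-limit compactness (Lemma 3.1, which gave convergence only in $H_0^1 \times L^2$) to strong convergence in the energy space $W_0^{1,p} \times L^2$, under the extra restriction $p < 4$. The decisive new ingredient is Lemma 4.1: the weak $\omega$-limit set $\omega_w(B)$ is bounded in $W^{1,\infty} \times W^{1,\infty}$. The plan is to exploit this extra regularity to recover the \emph{energy equality} along complete trajectories, which in turn is the standard mechanism for upgrading weak convergence to strong convergence via the energy (or \emph{energy functional}) method of [16--18].

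**Key steps.** First I would extract, from the sequence $\{S(t_k)\varphi_k\}$, a subsequence converging weakly in $W_0^{1,p} \times L^2$ to some limit $\varphi_\infty$; by the characterization of $\omega_w(B)$ this limit lies in $\omega_w(B)$, hence by Lemma 4.1 it is a point of the $W^{1,\infty} \times W^{1,\infty}$-bounded set. By Lemma 3.1 I already have strong convergence of the first components in $H_0^1$ and of the velocities in $L^2$; what remains is to upgrade the first component from $H_0^1$ to $W_0^{1,p}$. I would argue by way of a backward-in-time representation: for each $k$, write $S(t_k)\varphi_k = S(T_0)\,S(t_k - T_0)\varphi_k$ for a fixed large $T_0$, and after passing to a subsequence let $S(t_k - T_0)\varphi_k \rightharpoonup \psi_0 \in \omega_w(B)$ weakly. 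Because $\psi_0$ has the $W^{1,\infty}$-regularity of Lemma 4.1, the corresponding solution enjoys enough extra smoothness (the control on $u_{tt}$ in $H^{-\varepsilon}$ from (4.7), together with the boundedness of $u_x$ in $L^\infty$) that the energy equality holds on $[0,T_0]$ for this limiting trajectory. The key point is then to combine the weak lower semicontinuity of the energy with the energy equality along the limit trajectory: the functional $E$ is weakly lower semicontinuous, and the presence of the strict Lyapunov structure plus the energy \emph{equality} for the regular limit forces $\limsup$ of the energies of the approximants to not exceed the energy of the limit, giving convergence of the full norm $\|u_{kx}(t_k)\|_{L^p}^p$ to $\|u_x\|_{L^p}^p$.

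**Completing the upgrade.** With norm convergence $\|u_{kx}(t_k)\|_{L^p} \to \|u_{\infty x}\|_{L^p}$ in hand, together with the already-established weak convergence $u_{kx}(t_k) \rightharpoonup u_{\infty x}$ in $L^p$, uniform convexity of $L^p(0,1)$ for $p>2$ yields strong convergence $u_{kx}(t_k) \to u_{\infty x}$ in $L^p$, i.e. strong convergence in $W_0^{1,p}$. Coupling this with the $L^2$-convergence of velocities from Lemma 3.1 gives the claimed convergence in $W_0^{1,p} \times L^2$.

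**Main obstacle.** The hard part is rigorously establishing the \emph{energy equality} for the limit trajectory. The whole difficulty singled out in the introduction is precisely that a generic weak solution need not satisfy the energy equality (only the inequality (2.4)); one obtains equality only after verifying enough regularity to justify testing the equation (2.1)$_1$ with $u_t$ and integrating by parts without loss. This is exactly where $p<4$ and Lemma 4.1 enter: the $W^{1,\infty}$-bound on $u_x$ makes $|u_x|^{p-2}u_x$ an admissible test-pairing with $u_{tx}$, and the bounds (4.4) and (4.7) render $\langle u_{tt}, u_t\rangle$ meaningful, so that the term $\tfrac{d}{dt}\tfrac1p\|u_x\|_{L^p}^p = \int |u_x|^{p-2}u_x\,u_{tx}$ is legitimate. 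Carefully verifying that each term in the energy identity is well-defined and that the integration by parts carries no defect is the crux; once that is settled, the lower-semicontinuity-plus-uniform-convexity argument is routine.
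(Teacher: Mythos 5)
Your overall strategy is the right one and matches the paper's in outline: use Lemma 4.1 to justify the energy equality along the limiting trajectory issued from a weak limit point of $S(t_{k}-T_{0})\varphi _{k}$ in $\omega _{w}(B)$, deduce convergence of the $L^{p}$-norms of the gradients, and upgrade weak to strong convergence in $W_{0}^{1,p}$. However, there is a genuine gap at the decisive step. You assert that ``weak lower semicontinuity of $E$'' together with the energy equality for the regular limit ``forces $\limsup $ of the energies of the approximants to not exceed the energy of the limit.'' Lower semicontinuity gives $E(u(t))\leq \liminf_{m}E(u_{m}(t))$, which is the \emph{wrong} direction; the upper bound $\limsup_{m}E(u_{m}(t))\leq E(u(t))$ is precisely what must be proved, and neither the Lyapunov structure nor the energy equality of the limit yields it by itself: the approximants satisfy only the energy \emph{inequality} (2.4), and $E(u_{m}(0))$ need not converge to $E(u(0))$, again by mere lower semicontinuity. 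The paper supplies the missing mechanism: testing the equation for $u_{m}$ with $u_{m}$ itself (not with $u_{mt}$) produces the time-averaged identities (4.11)--(4.13), in which the troublesome terms $\int_{0}^{t}\left\Vert u_{mx}\right\Vert _{L^{p}}^{p}d\tau $ and $\int_{0}^{t}\left\Vert u_{mt}\right\Vert _{L^{2}}^{2}d\tau $ occur in a combination expressible through weakly continuous quantities, so that $\limsup_{m}\int_{0}^{t}E(u_{m}(\tau ))d\tau \leq c_{4}+\int_{0}^{t}E(u(\tau ))d\tau $; then the monotonicity $tE(u_{m}(t))\leq \int_{0}^{t}E(u_{m}(\tau ))d\tau $ coming from (2.4), applied to the left side, and the energy equality (4.15) for $u$, applied to the right side, convert this into the pointwise bound (4.16). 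Without this (or an equivalent Ball-type auxiliary functional) your argument does not close.

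A secondary point: even with the correct mechanism one does not obtain exact norm convergence $\left\Vert u_{mx}(T_{0})\right\Vert _{L^{p}}\rightarrow \left\Vert u_{x}(T_{0})\right\Vert _{L^{p}}$ at fixed $T_{0}$, only $\limsup_{m}\left\Vert u_{mx}(T_{0})\right\Vert _{L^{p}}^{p}\leq \left\Vert u_{x}(T_{0})\right\Vert _{L^{p}}^{p}+pc_{4}/T_{0}$, with an error that decays only as $T_{0}\rightarrow \infty $. Hence your uniform-convexity (Radon--Riesz) step cannot be applied directly at fixed $T_{0}$; the paper instead combines (4.16)--(4.17) with the monotonicity inequality (3.8) to get $\limsup_{k}\limsup_{m}\left\Vert u_{mx}(T_{0})-u_{kx}(T_{0})\right\Vert _{L^{p}}\leq 2c_{5}T_{0}^{-1/p}$ and then runs the Cauchy-subsequence diagonal argument over $T_{0}\rightarrow \infty $, exactly as at the end of Lemma 3.1; you would need to add this final diagonalization. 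Finally, your identification of the ``main obstacle'' is slightly off: once Lemma 4.1 is available, justifying the energy equality for the limit trajectory is routine; the real crux is the upper semicontinuity of the energy along the approximating sequence.
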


\begin{proof}
Since Lemma 3.1 implies that $\left\{ S\left( t_{k}\right) \varphi
_{k}\right\} _{k=1}^{\infty }$ has a convergent subsequence in $%
H_{0}^{1}\left( 0,1\right) \times L^{2}\left( 0,1\right) $, it is sufficient
to prove that $\left\{ PS\left( t_{k}\right) \varphi _{k}\right\}
_{k=1}^{\infty }$ admits a convergent subsequence in $W_{0}^{1,p}\left(
0,1\right) $. Let $u_{m}(t,x)$ and $u(t,x)$ are the same functions as in the
proof of Lemma 3.1. Then, by Lemma 3.1, beside (3.6), we also have%
\begin{equation*}
S\left( t_{k_{m}}-T_{0}\right) \varphi _{k_{m}}\rightarrow \varphi _{0}\in
\omega _{w}(B)\text{ \ strongly in }H_{0}^{1}\left( 0,1\right) \times
L^{2}\left( 0,1\right) \text{,}
\end{equation*}%
and consequently, by (2.5),%
\begin{equation}
(u(t),u_{t}(t))=S(t)\varphi _{0}\text{ and\ }u_{m}\left( t\right)
\rightarrow u\left( t\right) \text{ strongly in }H_{0}^{1}\left( 0,1\right) ,%
\text{\ \ }\forall t\geq 0\text{.}  \tag{4.10}
\end{equation}%
Now, putting $u_{m}$ instead of $u$ in (2.1), by (2.2)-(2.4), we have%
\begin{equation*}
\left\Vert u_{m}\left( t\right) \right\Vert _{W^{1,p}\left( 0,1\right)
}+\left\Vert u_{mt}\left( t\right) \right\Vert _{L^{2}\left( 0,1\right)
}+\int\limits_{0}^{t}\left\Vert u_{mtx}\left( \tau \right) \right\Vert
_{L^{2}\left( 0,1\right) }^{2}d\tau \leq c_{1},\text{ \ }\forall t\geq 0.
\end{equation*}%
Also, putting $u_{m}$ instead of $u$ in (2.1) and testing the obtained
equation by $u_{m}$ in $\left( 0,t\right) \times \left( 0,1\right) $, we find%
\begin{equation*}
\int\limits_{0}^{t}\left\Vert u_{mx}\left( \tau \right) \right\Vert
_{L^{p}\left( 0,1\right) }^{p}d\tau
+\int\limits_{0}^{t}\int\limits_{0}^{1}f\left( u_{m}\left( \tau ,x\right)
\right) u_{m}\left( \tau ,x\right) dxd\tau
\end{equation*}%
\begin{equation*}
-\int\limits_{0}^{t}\int\limits_{0}^{1}g\left( x\right) u_{m}\left( \tau
,x\right) dxd\tau =\int\limits_{0}^{t}\left\Vert u_{mt}\left( \tau \right)
\right\Vert _{L^{2}\left( 0,1\right) }^{2}d\tau
\end{equation*}%
\begin{equation*}
-\int\limits_{0}^{1}u_{mt}\left( t,x\right) u_{m}\left( t,x\right)
dx+\int\limits_{0}^{1}u_{mt}\left( 0,x\right) u_{m}\left( 0,x\right) dx
\end{equation*}%
\begin{equation*}
-\frac{1}{2}\left\Vert u_{mx}\left( t\right) \right\Vert _{L^{2}\left(
0,1\right) }^{2}+\frac{1}{2}\left\Vert u_{mx}\left( 0\right) \right\Vert
_{L^{2}\left( 0,1\right) }^{2},\text{ \ \ }\forall t\geq 0,
\end{equation*}%
which, together with the last inequality, yields%
\begin{equation*}
\left\vert \frac{1}{2}\int\limits_{0}^{t}\left\Vert u_{mt}\left( \tau
\right) \right\Vert _{L^{2}\left( 0,1\right) }^{2}d\tau +\frac{1}{p}%
\int\limits_{0}^{t}\left\Vert u_{mx}\left( \tau \right) \right\Vert
_{L^{p}\left( 0,1\right) }^{p}d\tau \right.
\end{equation*}%
\begin{equation}
+\left. \frac{1}{p}\int\limits_{0}^{t}\int\limits_{0}^{1}f\left( u_{m}\left(
\tau ,x\right) \right) u_{m}\left( \tau ,x\right) dxd\tau -\frac{1}{p}%
\int\limits_{0}^{t}\int\limits_{0}^{1}g\left( x\right) u_{m}\left( \tau
,x\right) dxd\tau \right\vert \leq c_{2}\text{, \ \ }\forall t\geq 0.
\tag{4.11}
\end{equation}%
Similarly, for $u\left( t,x\right) $, we have%
\begin{equation*}
\left\vert \frac{1}{2}\int\limits_{0}^{t}\left\Vert u_{t}\left( \tau \right)
\right\Vert _{L^{2}\left( 0,1\right) }^{2}d\tau +\frac{1}{p}%
\int\limits_{0}^{t}\left\Vert u_{x}\left( \tau \right) \right\Vert
_{L^{p}\left( 0,1\right) }^{p}d\tau \right.
\end{equation*}%
\begin{equation}
\left. +\frac{1}{p}\int\limits_{0}^{t}\int\limits_{0}^{1}f\left( u\left(
\tau ,x\right) \right) u\left( \tau ,x\right) dxd\tau -\frac{1}{p}%
\int\limits_{0}^{t}\int\limits_{0}^{1}g\left( x\right) u\left( \tau
,x\right) dxd\tau \right\vert \leq c_{3}\text{, \ \ }\forall t\geq 0.
\tag{4.12}
\end{equation}%
By (4.11)-(4.12), we get%
\begin{equation*}
\frac{1}{2}\int\limits_{0}^{t}\left\Vert u_{mt}\left( \tau \right)
\right\Vert _{L^{2}\left( 0,1\right) }^{2}d\tau +\frac{1}{p}%
\int\limits_{0}^{t}\left\Vert u_{mx}\left( \tau \right) \right\Vert
_{L^{p}\left( 0,1\right) }^{p}d\tau
\end{equation*}%
\begin{equation*}
\leq c_{2}+c_{3}+\frac{1}{2}\int\limits_{0}^{t}\left\Vert u_{t}\left( \tau
\right) \right\Vert _{L^{2}\left( 0,1\right) }^{2}d\tau +\frac{1}{p}%
\int\limits_{0}^{t}\left\Vert u_{x}\left( \tau \right) \right\Vert
_{L^{p}\left( 0,1\right) }^{p}d\tau
\end{equation*}%
\begin{equation*}
+\frac{1}{p}\int\limits_{0}^{t}\int\limits_{0}^{1}\left[ f\left( u\left(
\tau ,x\right) \right) u\left( \tau ,x\right) -g\left( x\right) u\left( \tau
,x\right) \right.
\end{equation*}%
\begin{equation*}
\left. -f\left( u_{m}\left( \tau ,x\right) \right) u_{m}\left( \tau
,x\right) +g\left( x\right) u_{m}\left( \tau ,x\right) \right] dxd\tau ,
\end{equation*}%
and consequently%
\begin{equation}
\int\limits_{0}^{t}E\left( u_{m}\left( \tau \right) \right) d\tau \leq
c_{2}+c_{3}+\int\limits_{0}^{t}E\left( u\left( \tau \right) \right) d\tau
+\Lambda _{m}\left( t\right) \text{, \ \ }\forall t\geq 0,  \tag{4.13}
\end{equation}%
where
\begin{equation*}
\Lambda _{m}\left( t\right) :=\frac{1}{p}\int\limits_{0}^{t}\int%
\limits_{0}^{1}\left[ f\left( u\left( \tau ,x\right) \right) u\left( \tau
,x\right) -f\left( u_{m}\left( \tau ,x\right) \right) u_{m}\left( \tau
,x\right) \right] dxd\tau
\end{equation*}%
\begin{equation*}
+\frac{p-1}{p}\int\limits_{0}^{t}\int\limits_{0}^{1}g\left( x\right) \left(
u\left( \tau ,x\right) -u_{m}\left( \tau ,x\right) \right) dxd\tau
\end{equation*}%
\begin{equation*}
+\int\limits_{0}^{t}\int\limits_{0}^{1}\left[ F\left( u_{m}\left( \tau
,x\right) \right) -F\left( u\left( \tau ,x\right) \right) \right] dxd\tau .
\end{equation*}%
By (3.6), it is easy to see that%
\begin{equation*}
\lim\limits_{m\rightarrow \infty }\Lambda _{m}\left( t\right) =0\text{, \ \ }%
\forall t\geq 0\text{.}
\end{equation*}%
Hence, passing to the limit in (4.13), we obtain%
\begin{equation}
\limsup\limits_{m\rightarrow \infty }\int\limits_{0}^{t}E\left( u_{m}\left(
\tau \right) \right) d\tau \leq c_{4}+\int\limits_{0}^{t}E\left( u\left(
\tau \right) \right) d\tau \text{, \ \ }\forall t\geq 0.  \tag{4.14}
\end{equation}%
Since $\varphi _{0}\in \omega _{w}\left( B\right) $, by the invariance of $\
\omega _{w}\left( B\right) $ and Lemma 4.1, we have $\left( u\left( t\right)
,u_{t}\left( t\right) \right) \in $ $\omega _{w}\left( B\right) \subset
W^{1,\infty }\left( 0,1\right) \times W^{1,\infty }\left( 0,1\right) $.
Hence, testing (2.1) by $u_{t}$ in $\left( 0,t\right) \times \left(
0,1\right) $, we find the energy equality%
\begin{equation}
E\left( u\left( t\right) \right) +\int\limits_{\tau }^{t}\left\Vert
u_{xt}\left( s\right) \right\Vert _{L^{2}\left( 0,1\right) }^{2}ds=E\left(
u\left( \tau \right) \right) \text{, \ }0\leq \tau \leq t.  \tag{4.15}
\end{equation}%
Now, applying the energy inequality (2.4) to the left hand side of (4.14)
and the energy equality (4.15) to the right hand side of (4.14), and taking
into account (3.6) , we get%
\begin{equation*}
\limsup\limits_{m\rightarrow \infty }tE\left( u_{m}\left( t\right) \right)
\leq c_{4}+tE\left( u\left( t\right) \right) ,
\end{equation*}%
and consequently%
\begin{equation}
\limsup\limits_{m\rightarrow \infty }\left\Vert u_{mx}\left( t\right)
\right\Vert _{L^{p}\left( 0,1\right) }^{p}\leq \frac{pc_{4}}{t}+\left\Vert
u_{x}\left( t\right) \right\Vert _{L^{p}\left( 0,1\right) }^{p},\text{ \ }%
\forall t>0.  \tag{4.16}
\end{equation}%
By using (3.6)$_{5}$ and (4.10), we also obtain%
\begin{equation*}
\limsup\limits_{m\rightarrow \infty }\int\limits_{0}^{1}\left( \left\vert
u_{mx}\left( t,x\right) \right\vert ^{p-2}u_{mx}\left( t,x\right)
-\left\vert u_{x}\left( t,x\right) \right\vert ^{p-2}u_{x}\left( t,x\right)
\right)
\end{equation*}%
\begin{equation}
\times \left( u_{mx}\left( t,x\right) -u_{x}\left( t,x\right) \right)
dx=\limsup\limits_{m\rightarrow \infty }\left\Vert u_{mx}\left( t\right)
\right\Vert _{L^{p}\left( 0,1\right) }^{p}-\left\Vert u_{x}\left( t\right)
\right\Vert _{L^{p}\left( 0,1\right) }^{p}.  \tag{4.17}
\end{equation}%
Therefore, taking into account (3.8), by (4.16)-(4.17), we have%
\begin{equation*}
\limsup\limits_{m\rightarrow \infty }\left\Vert u_{mx}\left( t\right)
-u_{x}\left( t\right) \right\Vert _{L^{p}\left( 0,1\right) }\leq \frac{c_{5}%
}{t^{\frac{1}{p}}},\text{\ \ }\forall t>0,
\end{equation*}%
and consequently%
\begin{equation*}
\limsup\limits_{k\rightarrow \infty }\limsup\limits_{m\rightarrow \infty
}\left\Vert u_{mx}\left( t\right) -u_{kx}\left( t\right) \right\Vert
_{L^{p}\left( 0,1\right) }\leq \frac{2c_{5}}{t^{\frac{1}{p}}},\text{ \ }%
\forall t>0.
\end{equation*}%
Taking $t=T_{0}$ in the last inequality, we get$^{{}}$%
\begin{equation*}
\liminf\limits_{k\rightarrow \infty }\liminf\limits_{m\rightarrow \infty
}\left\Vert PS\left( t_{m}\right) \varphi _{m}-PS\left( t_{k}\right) \varphi
_{k}\right\Vert _{W_{0}^{1,p}\left( 0,1\right) }=0.
\end{equation*}%
Thus, repeating the arguments done at the end of the proof of Lemma 3.1, we
obtain that the sequence $\left\{ P\left( S\left( t_{k}\right) \right)
\varphi _{k}\right\} _{k=1}^{\infty }$ has a convergent subsequence in $%
W_{0}^{1,p}\left( 0,1\right) $.
\end{proof}

\bigskip From (2.4), it follows that the problem (2.1) admits a strict
Lyapunov function%
\begin{equation*}
L\left( u,v\right) =\frac{1}{2}\left\Vert v\right\Vert _{L^{2}\left(
0,1\right) }^{2}+\frac{1}{p}\left\Vert u_{x}\right\Vert _{L^{p}\left(
0,1\right) }^{p}+\int\limits_{0}^{1}F(u(x))dx-\int\limits_{0}^{1}g\left(
x\right) u\left( x\right) dx
\end{equation*}%
in $W_{0}^{1,p}\left( 0,1\right) \times L^{2}\left( 0,1\right) $. Hence, by
Lemma 4.1, Lemma 4.2 and [18, Corollary 7.5.7], we obtain Theorem 2.3.
\bigskip

\appendix

\section{\protect\bigskip}

\begin{lemma}
If $f\in H^{-\varepsilon }\left( 0,1\right) $ and $f^{\prime }\in
H^{-\varepsilon }\left( 0,1\right) $, then $f\in C\left[ 0,1\right] $ and
\begin{equation}
\left\Vert f\right\Vert _{C\left[ 0,1\right] }\leq c\left( \left\Vert
f\right\Vert _{H^{-\varepsilon }\left( 0,1\right) }+\left\Vert f^{\prime
}\right\Vert _{H^{-\varepsilon }\left( 0,1\right) }\right) ,  \tag{A.1}
\end{equation}%
where $\varepsilon \in \lbrack 0,\frac{1}{2})$.
\end{lemma}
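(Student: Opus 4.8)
The plan is to recognize this as the Sobolev embedding $H^{1-\varepsilon}(0,1)\hookrightarrow C[0,1]$, which holds precisely because $1-\varepsilon>\tfrac12$. Thus it suffices to establish the \emph{lifting estimate}
\[
\left\Vert f\right\Vert _{H^{1-\varepsilon}(0,1)}\leq c\left(\left\Vert f\right\Vert _{H^{-\varepsilon}(0,1)}+\left\Vert f^{\prime }\right\Vert _{H^{-\varepsilon}(0,1)}\right)
\]
and then quote the embedding. The analytic engine is the whole-line identity $\left\Vert u\right\Vert _{H^{1-\varepsilon}(\mathbb{R})}^{2}=\left\Vert u\right\Vert _{H^{-\varepsilon}(\mathbb{R})}^{2}+\left\Vert u^{\prime }\right\Vert _{H^{-\varepsilon}(\mathbb{R})}^{2}$, which is immediate from Plancherel via $(1+\xi ^{2})^{1-\varepsilon }=(1+\xi ^{2})^{-\varepsilon }+\xi ^{2}(1+\xi ^{2})^{-\varepsilon }$ and $\widehat{u^{\prime }}=i\xi \widehat{u}$. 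All of the actual work is in transferring this from $\mathbb{R}$ to the bounded interval, and here I expect the main obstacle.

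The difficulty is that extension by zero does \emph{not} commute with $d/dx$: for the zero extension $\widetilde{f}=f\,\mathbf{1}_{(0,1)}$ one has $\widetilde{f}^{\prime }=\widetilde{f^{\prime }}+f(0)\delta _{0}-f(1)\delta _{1}$, and since $\varepsilon <\tfrac12$ the Dirac masses do \emph{not} lie in $H^{-\varepsilon }(\mathbb{R})$, so one cannot extend $f$ and differentiate. My plan is to circumvent this by extending the \emph{derivative} instead. Put $g=f^{\prime }\in H^{-\varepsilon }(0,1)$ and let $\overline{g}=E_{0}g$ be its zero extension; because $\varepsilon <\tfrac12$ one has $H^{\varepsilon }(0,1)=H_{0}^{\varepsilon }(0,1)$, so $E_{0}=R^{\ast }$ (the adjoint of restriction) is bounded $H^{-\varepsilon }(0,1)\to H^{-\varepsilon }(\mathbb{R})$ with $\operatorname{supp}\overline{g}\subseteq \lbrack 0,1]$. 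Form the primitive $F=\Theta \ast \overline{g}$, where $\Theta =\mathbf{1}_{(0,\infty )}$; then $F^{\prime }=\delta \ast \overline{g}=\overline{g}$, and since $\overline{g}|_{(0,1)}=g=f^{\prime }$ we get $(F-f)^{\prime }=0$ on the interval, i.e. $f=F+\text{const}$ on $(0,1)$.

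To finish I would show $F\in L^{\infty }(\mathbb{R})$ from the representation $F(x)=\langle \overline{g},\mathbf{1}_{(-\infty ,x)}\rangle =\langle \overline{g},\psi \,\mathbf{1}_{(-\infty ,x)}\rangle $, where $\psi \in C_{c}^{\infty }$ equals $1$ on $[0,1]\supseteq \operatorname{supp}\overline{g}$; the truncated jump function $\psi \,\mathbf{1}_{(-\infty ,x)}$ lies in $H^{\varepsilon }(\mathbb{R})$ (again using $\varepsilon <\tfrac12$) with norm bounded uniformly in $x$, whence $\left\Vert F\right\Vert _{L^{\infty }}\leq c\left\Vert \overline{g}\right\Vert _{H^{-\varepsilon }(\mathbb{R})}\leq c\left\Vert f^{\prime }\right\Vert _{H^{-\varepsilon }(0,1)}$. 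Choosing a cutoff $\chi \in C_{c}^{\infty }$ with $\chi \equiv 1$ on $[0,1]$, we have $\chi F\in L^{2}(\mathbb{R})\subset H^{-\varepsilon }(\mathbb{R})$ and $(\chi F)^{\prime }=\chi ^{\prime }F+\chi \overline{g}\in H^{-\varepsilon }(\mathbb{R})$ (the first summand is in $L^{2}$, the second preserves $H^{-\varepsilon }$ under multiplication by a smooth compactly supported factor). The whole-line identity then yields $\chi F\in H^{1-\varepsilon }(\mathbb{R})$; restricting to $(0,1)$ gives $F\in H^{1-\varepsilon }(0,1)$, and adding back the constant—whose size is controlled by testing $f-F$ against a fixed $\eta \in C_{c}^{\infty }(0,1)$ with $\int \eta =1$, so that $|c|\leq c(\left\Vert f\right\Vert _{H^{-\varepsilon }}+\left\Vert f^{\prime }\right\Vert _{H^{-\varepsilon }})$—delivers the lifting estimate and hence, by the embedding, the bound \eqref{A.1}. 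It is worth noting that the hypothesis $\varepsilon <\tfrac12$ is used in exactly three places: the boundedness of the zero extension of $g$, the membership of the indicator in $H^{\varepsilon }$, and the final embedding into $C[0,1]$.
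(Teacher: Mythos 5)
Your proof is correct, but it takes a genuinely different route from the paper's. Both arguments ultimately rest on the same two whole-line facts --- the Plancherel identity $\left\Vert \Phi \right\Vert _{H^{1-\varepsilon }(\mathbb{R})}^{2}\simeq \left\Vert \Phi \right\Vert _{H^{-\varepsilon }(\mathbb{R})}^{2}+\left\Vert \Phi ^{\prime }\right\Vert _{H^{-\varepsilon }(\mathbb{R})}^{2}$ and the embedding $H^{1-\varepsilon }(\mathbb{R})\hookrightarrow C_{b}(\mathbb{R})$ --- so the entire issue is how to pass from $(0,1)$ to $\mathbb{R}$. The paper does this in two stages: first a duality argument showing that $\mathcal{D}[0,1]$ is dense in the space $X=\{f:f,f^{\prime }\in H^{-\varepsilon }(0,1)\}$ (any functional $v\mapsto \langle u_{0},v\rangle +\langle u_{1},v^{\prime }\rangle $ annihilating $\mathcal{D}[0,1]$ forces $u_{1}^{\prime }=u_{0}$ with $u_{1}\in H_{0}^{1+\varepsilon }(0,1)$, hence annihilates all of $X$), and then, for smooth $f$, a cutoff-plus-reflection extension whose negative-norm bounds (A.3) are checked directly. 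You skip the density step entirely and work at the distributional level: you extend the \emph{derivative} by zero (bounded on $H^{-\varepsilon }$ precisely because $\varepsilon <\tfrac12$, via $H^{\varepsilon }(0,1)=H_{0}^{\varepsilon }(0,1)$), integrate it up to a primitive $F$ whose $L^{\infty }$ bound comes from pairing with truncated indicators, and recover $f=F+\mathrm{const}$ on $(0,1)$, with the constant controlled by testing against a fixed bump. The trade-off is clear: the paper front-loads the difficulty into the density lemma and afterwards only manipulates smooth functions; your route avoids density but leans on several distribution-level facts (boundedness of the zero extension in negative order, the uniform $H^{\varepsilon }$ bound on $\psi \,\mathbf{1}_{(-\infty ,x)}$, and the identification of $\Theta \ast \overline{g}$ with the pointwise pairing formula), each standard for $\varepsilon <\tfrac12$ but deserving of a citation or a line of justification. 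As a by-product your argument yields the slightly stronger lifting estimate $\left\Vert f\right\Vert _{H^{1-\varepsilon }(0,1)}\leq c\left( \left\Vert f\right\Vert _{H^{-\varepsilon }(0,1)}+\left\Vert f^{\prime }\right\Vert _{H^{-\varepsilon }(0,1)}\right) $, of which (A.1) is an immediate corollary.
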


\begin{proof}
Firstly, let us prove density of $\mathcal{D}\left[ 0,1\right] $ in the
linear normed space $X=\left\{ f:f\in H^{-\varepsilon }\left( 0,1\right)
,\right. $\newline
$\left. f^{\prime }\in H^{-\varepsilon }\left( 0,1\right) \right\} $ endowed
with the norm
\begin{equation*}
\left\Vert f\right\Vert _{X}=\left\Vert f\right\Vert _{H^{-\varepsilon
}\left( 0,1\right) }+\left\Vert f^{\prime }\right\Vert _{H^{-\varepsilon
}\left( 0,1\right) }.
\end{equation*}%
Let us define a linear continuous functional $\phi $ on $X$ such that%
\begin{equation*}
\phi \left( v\right) :=\left\langle u_{0},v\right\rangle +\left\langle
u_{1},v^{\prime }\right\rangle ,
\end{equation*}%
where $u_{0},u_{1}\in H^{\varepsilon }\left( 0,1\right) $. Assume that%
\begin{equation}
\phi \left( \varphi \right) =0,  \tag{A.2}
\end{equation}%
for every $\varphi \in $ $\mathcal{D}\left[ 0,1\right] $. To prove that $%
\overline{\mathcal{D}\left[ 0,1\right] }^{X}=X$, it is sufficient to show
that%
\begin{equation*}
\phi \left( v\right) =0,
\end{equation*}%
for every $v\in X.$ Let%
\begin{equation*}
\widehat{u}_{i}\left( x\right) :=\left\{
\begin{array}{l}
u_{i}\left( x\right) ,\text{ }x\in \left( 0,1\right) , \\
0,\text{ }%
\mathbb{R}
\backslash \left( 0,1\right)%
\end{array}%
\right. ,\text{ }i=0,1,
\end{equation*}%
and%
\begin{equation*}
\widehat{\varphi }\left( x\right) :=\left\{
\begin{array}{l}
\varphi \left( x\right) ,\text{ }x\in \left[ 0,1\right] , \\
\varphi \left( 1\right) ,\text{ }x>1, \\
\varphi \left( 0\right) ,\text{ }x<0,%
\end{array}%
\right.
\end{equation*}%
where $\varphi \in $ $\mathcal{D}\left[ 0,1\right] $. In addition, let us
denote $\widetilde{\varphi }\left( x\right) :=\rho \left( x\right) \widehat{%
\varphi }\left( x\right) $, where $\rho \in \mathcal{D}\left(
\mathbb{R}
\right) $ and $\rho \left( x\right) =1$, for $x\in \left[ 0,1\right] $.
Since $\widetilde{\varphi }\in H^{1}\left(
\mathbb{R}
\right) $, by (A.2), it follows that%
\begin{equation*}
\int\limits_{%
\mathbb{R}
}\widehat{u}_{0}\left( x\right) \widetilde{\varphi }\left( x\right)
dx+\int\limits_{%
\mathbb{R}
}\widehat{u}_{1}\left( x\right) \frac{d}{dx}\widetilde{\varphi }\left(
x\right) dx=0,
\end{equation*}%
and consequently%
\begin{equation*}
\frac{d}{dx}\widehat{u}_{1}\left( x\right) =\widehat{u}_{0}\left( x\right) .
\end{equation*}%
The last equality gives us that $\widehat{u}_{1}\in H^{1+\varepsilon }\left(
\mathbb{R}
\right) $, and consequently $u_{1}\in H_{0}^{1+\varepsilon }\left(
0,1\right) $. Hence, by the definition of $\phi $, we get%
\begin{equation*}
\phi \left( v\right) =\left\langle u_{0},v\right\rangle +\left\langle
u_{1},v^{\prime }\right\rangle
\end{equation*}%
\begin{equation*}
=\left\langle u_{0},v\right\rangle -\left\langle u_{1}^{\prime
},v\right\rangle =\left\langle u_{0}-u_{1}^{\prime },v\right\rangle =0,
\end{equation*}%
for every $v\in X$.

Now, to complete the proof of the lemma, it is sufficient to prove (A.1) for
$f\in \mathcal{D}\left[ 0,1\right] $. Let $f\in \mathcal{D}\left[ 0,1\right]
$, $\alpha \in \mathcal{D[}0,1)$ and $\alpha (x)=1$, for $x\in \mathcal{[}0,%
\frac{1}{2}]$. Define $\widetilde{f}\left( x\right) :=\left\{
\begin{array}{l}
\alpha \left( x\right) f\left( x\right) ,\text{ }x\in \lbrack 0,1), \\
0,\text{ }x>1,%
\end{array}%
\right. $ and $\Phi \left( x\right) :=\left\{
\begin{array}{l}
\widetilde{f}\left( x\right) ,\text{ }x>0, \\
\widetilde{f}\left( -x\right) ,\text{ }x\leq 0.%
\end{array}%
\right. $ It is easy to verify that $\Phi \in H^{1}\left(
\mathbb{R}
\right) $ and%
\begin{equation}
\left\Vert \Phi \right\Vert _{H^{-\varepsilon }\left(
\mathbb{R}
\right) }+\left\Vert \Phi ^{\prime }\right\Vert _{H^{-\varepsilon }\left(
\mathbb{R}
\right) }\leq c_{1}\left( \left\Vert f\right\Vert _{H^{-\varepsilon }\left(
0,1\right) }+\left\Vert f^{\prime }\right\Vert _{H^{-\varepsilon }\left(
0,1\right) }\right) .  \tag{A.3}
\end{equation}%
On the other hand, by using Fourier transformation, one can show that
\begin{equation}
\left\Vert \Phi \right\Vert _{H^{1-\varepsilon }\left(
\mathbb{R}
\right) }\leq c_{2}\left( \left\Vert \Phi \right\Vert _{H^{-\varepsilon
}\left(
\mathbb{R}
\right) }+\left\Vert \Phi ^{\prime }\right\Vert _{H^{-\varepsilon }\left(
\mathbb{R}
\right) }\right) .  \tag{A.4}
\end{equation}%
Taking into account the continuous embedding $H^{1-\varepsilon }\left(
\mathbb{R}
\right) \hookrightarrow C_{b}\left(
\mathbb{R}
\right) $, by (A.3) and (A.4), we obtain%
\begin{equation*}
\left\Vert \alpha f\right\Vert _{C\left[ 0,1\right] }\leq c_{3}\left(
\left\Vert f\right\Vert _{H^{-\varepsilon }\left( 0,1\right) }+\left\Vert
f^{\prime }\right\Vert _{H^{-\varepsilon }\left( 0,1\right) }\right) .
\end{equation*}%
Similarly, one can prove%
\begin{equation*}
\left\Vert \left( 1-\alpha \right) f\right\Vert _{C\left[ 0,1\right] }\leq
c_{4}\left( \left\Vert f\right\Vert _{H^{-\varepsilon }\left( 0,1\right)
}+\left\Vert f^{\prime }\right\Vert _{H^{-\varepsilon }\left( 0,1\right)
}\right) ,
\end{equation*}%
which, together with the previous inequality, yields (A.1).
\end{proof}

\begin{lemma}
Let $f\in L^{\infty }\left( 0,1\right) $ and $p>2$. Then, for every $u\in
W^{1,1}\left( 0,1\right) $ such that%
\begin{equation}
\left\vert u^{\prime }+\left\vert u\right\vert ^{p-2}u\right\vert \leq f%
\text{ \ \ a.e. in }\left( 0,1\right) ,  \tag{A.5}
\end{equation}%
the following estimates hold:%
\begin{equation}
\left\Vert u\right\Vert _{C\left[ s_{0},1\right] }\leq \left( \frac{p}{p-2}%
+\left\Vert f\right\Vert _{L^{\infty }\left( 0,1\right) }\right) ^{\frac{1}{%
p-2}},  \tag{A.6}
\end{equation}%
and%
\begin{equation}
\left\Vert u^{\prime }\right\Vert _{L^{\infty }\left( s_{0},1\right) }\leq
\left( \frac{p}{p-2}+\left\Vert f\right\Vert _{L^{\infty }\left( 0,1\right)
}\right) ^{\frac{p-1}{p-2}}+\left\Vert f\right\Vert _{L^{\infty }\left(
0,1\right) },  \tag{A.7}
\end{equation}%
where $s_{0}=1-\frac{\left( \frac{p}{p-2}\right) ^{\frac{1}{p-2}}-1}{\left(
\frac{p}{p-2}\right) ^{\frac{p-1}{p-2}}+\left\Vert f\right\Vert _{L^{\infty
}\left( 0,1\right) }}$.
\end{lemma}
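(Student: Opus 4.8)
The plan is to reduce everything to the single pointwise bound (A.6) and then read off (A.7) for free. Write $F:=\|f\|_{L^{\infty}(0,1)}$ and set $M:=\left(\frac{p}{p-2}+F\right)^{1/(p-2)}$, so that (A.6) asserts $\|u\|_{C[s_0,1]}\le M$. Once (A.6) is in hand, (A.7) follows at once from (A.5), since for $x\in(s_0,1)$ we have $|u'(x)|\le|u'(x)+|u(x)|^{p-2}u(x)|+|u(x)|^{p-1}\le F+M^{p-1}$ a.e., and $M^{p-1}=\left(\frac{p}{p-2}+F\right)^{(p-1)/(p-2)}$. Moreover, since (A.5) is invariant under $u\mapsto-u$, it suffices to prove the one-sided estimate $u(x)\le M$ for all $x\in[s_0,1]$; the bound $u\ge-M$ then comes from applying the same statement to $-u$.

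The core is a barrier/monotonicity argument, which I would run by contradiction: suppose $u(x_2)>M$ for some $x_2\in[s_0,1]$. The first observation is that $M$ is an upper barrier in the sense that $u$ must be \emph{strictly decreasing} wherever it exceeds $M$. Indeed, note $M>1$ (because $M^{p-2}=\frac{p}{p-2}+F>1$), whence $M^{p-1}>M^{p-2}=\frac{p}{p-2}+F>F$; so on the open set $\{u>M\}$, where $u>M>0$, (A.5) gives $u'\le F-u^{p-1}<F-M^{p-1}<0$ a.e. Since $u$ is absolutely continuous, this forces the connected component of $\{u>M\}$ containing $x_2$ to reach all the way down to $0$: if it had a left endpoint $x_L>0$ then continuity would give $u(x_L)=M$, while strict decrease on $(x_L,x_2]$ forces $u(x_L)\ge u(x_2)>M$, a contradiction. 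Hence $u>M$ on all of $[0,x_2]$. On this interval I would track the auxiliary quantity $\phi:=u^{-(p-2)}$, which is absolutely continuous (as $u$ is AC and bounded below by $M>0$). Writing $g:=u'+u^{p-1}$ with $|g|\le F$, a direct computation gives
\[
\phi'=-(p-2)u^{-(p-1)}u'=(p-2)-(p-2)u^{-(p-1)}g\ \ge\ (p-2)\bigl(1-Fu^{-(p-1)}\bigr)\ \ge\ (p-2)\bigl(1-FM^{-(p-1)}\bigr),
\]
and the last factor is strictly positive because $FM^{-(p-1)}<F/M^{p-2}<1$. Integrating over $[0,x_2]$ and using $\phi(0)>0$ together with $\phi(x_2)=u(x_2)^{-(p-2)}<M^{-(p-2)}$ yields
\[
M^{-(p-2)}>\phi(x_2)-\phi(0)\ge (p-2)\bigl(1-FM^{-(p-1)}\bigr)\,x_2,
\]
so that $x_2<\dfrac{M^{-(p-2)}M^{p-1}}{(p-2)(M^{p-1}-F)}=\dfrac{M}{(p-2)(M^{p-1}-F)}\le\dfrac1p$, the last inequality because $M^{p-1}-F=\frac{p}{p-2}M+F(M-1)\ge\frac{p}{p-2}M$. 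This contradicts $x_2\ge s_0$, once one checks $s_0\ge\frac1p$; and indeed, writing $a:=\left(\frac{p}{p-2}\right)^{1/(p-2)}$ one has $1-s_0=\frac{a-1}{a^{p-1}+F}<\frac{a}{a^{p-1}}=a^{2-p}=\frac{p-2}{p}$, hence $s_0>\frac2p$. Therefore no such $x_2$ exists, which is (A.6).

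The step I expect to be the main obstacle is controlling the perturbation term $Fu^{-(p-1)}$ in the evolution of $\phi$. Without the damping one would like the clean identity $\phi'=p-2$, giving the textbook decay $u(x)\le((p-2)x)^{-1/(p-2)}$; the inhomogeneity $g$ spoils this, and for large $F$ there are genuine constant solutions $u\equiv c$ with $a<c\le F^{1/(p-1)}$, so one cannot hope to drive $u$ below $a$. The fix is precisely to run the argument at the \emph{level} $M$ rather than at $a$: on $\{u>M\}$ the term $Fu^{-(p-1)}$ is uniformly bounded by $FM^{-(p-1)}<1$, which keeps $\phi'$ bounded below by a fixed positive multiple of $p-2$ and makes the length estimate close. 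The only genuinely technical point is that $u$ is merely $W^{1,1}=AC$, so all differential manipulations must be read a.e. and integrated via the fundamental theorem of calculus for absolutely continuous functions; this is harmless here because $\phi$ remains AC on $[0,x_2]$. I note finally that the argument uses only $s_0\ge\frac1p$, so the precise value of $s_0$ in the statement provides slack rather than being forced by this particular proof.
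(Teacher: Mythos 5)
Your proof is correct, and it takes a genuinely different route from the paper's. The paper splits into the cases $|u(1)|\le 1$ and $|u(1)|>1$: in the first it runs a backward continuation estimate from $x=1$ on the region where $|u|<\left(\frac{p}{p-2}\right)^{1/(p-2)}$ (this is exactly what forces the particular value of $s_{0}$), and in the second it derives a Gronwall-type inequality for $|u|^{p-2}$ via an integrating factor, $\left\vert u\left( T\right) \right\vert ^{p-2}\leq e^{-\left( p-2\right)\int_{s}^{T}\left\vert u\right\vert ^{p-2}}\left\vert u\left( s\right) \right\vert ^{p-2}+\left\Vert f\right\Vert _{L^{\infty }}$, followed by three further subcases. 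You instead reduce by the symmetry $u\mapsto -u$ to a one-sided bound and run a single contradiction argument: if $u>M$ somewhere in $[s_{0},1]$, the strict decrease of $u$ on $\{u>M\}$ propagates the inequality back to $x=0$, and the Riccati substitution $\phi=u^{-(p-2)}$ (whose derivative is bounded below by a positive constant there) confines the bad point to $[0,1/p)$, which misses $[s_{0},1]$ since $s_{0}>2/p$. Your $\phi$-substitution is the explicit-solution counterpart of the paper's integrating-factor inequality (it plays the role of the paper's $\beta=0$ subcase, where (A.9) is integrated in $s$ to get $T|u(T)|^{p-2}\le\frac{1}{p-2}+T\Vert f\Vert_{L^{\infty}}$), but your organization avoids the case analysis entirely and, as you note, uses only $s_{0}\ge 1/p$ rather than the precise value of $s_{0}$; the paper's Case 1 is what explains where that value comes from. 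All the elementary inequalities you invoke ($M>1$, $M^{p-1}-\Vert f\Vert_{L^{\infty}}\ge\frac{p}{p-2}M$, $s_{0}>2/p$) check out, and the absolute-continuity bookkeeping for $\phi$ is handled correctly since $u$ stays in a compact subset of $(0,\infty)$ on the interval in question. The only cosmetic slip is the strict inequality $F M^{-(p-1)}<F/M^{p-2}$, which degenerates to $0<0$ when $f\equiv 0$; the needed conclusion $FM^{-(p-1)}<1$ of course still holds there.
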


\begin{proof}
Since (A.7) immediately follows from (A.5) and (A.6), we will only prove
(A.6). We consider the following cases:

\textit{Case 1. }Assume that\textit{\ }$\left\vert u\left( 1\right)
\right\vert \leq 1$. Then, by the continuity of $u$, the set $E:=\left\{
t\in \left[ 0,1\right) :\left\vert u\left( s\right) \right\vert <\right. $%
\newline
$\left. \left( \frac{p}{p-2}\right) ^{\frac{1}{p-2}}\text{, for }s\in \left[
t,1\right] \right\} $ is nonempty. Let $\alpha =\inf E$. If $\alpha =0$,
then from the definition of $E$, we get (A.6). If $\alpha \in \left(
0,1\right) $, then again by the definition of $E$ and the continuity of $u$,
we have%
\begin{equation*}
\left\vert u\left( \alpha \right) \right\vert =\left( \frac{p}{p-2}\right) ^{%
\frac{1}{p-2}},
\end{equation*}%
\begin{equation}
\left\vert u\left( t\right) \right\vert <\left( \frac{p}{p-2}\right) ^{\frac{%
1}{p-2}},\text{ \ }\forall t\in \left( \alpha ,1\right] ,  \tag{A.8}
\end{equation}%
and consequently, by (A.5),%
\begin{equation*}
\left\vert u^{\prime }\right\vert <\left( \frac{p}{p-2}\right) ^{\frac{p-1}{%
p-2}}+\left\Vert f\right\Vert _{L^{\infty }\left( 0,1\right) },\text{ \ a.e.
in }(\alpha ,1).
\end{equation*}%
Considering the last inequality, we find%
\begin{equation*}
\left( \frac{p}{p-2}\right) ^{\frac{1}{p-2}}=\left\vert u\left( \alpha
\right) \right\vert \leq \left\vert u\left( 1\right) \right\vert
+\int\limits_{\alpha }^{1}\left\vert u^{\prime }\left( t\right) \right\vert
dt
\end{equation*}%
\begin{equation*}
\leq 1+\left( \left( \frac{p}{p-2}\right) ^{\frac{p-1}{p-2}}+\left\Vert
f\right\Vert _{L^{\infty }\left( 0,1\right) }\right) \left( 1-\alpha \right)
,
\end{equation*}%
which yields $\alpha \leq s_{0}$. Thus, from (A.8), we obtain (A.6).

\textit{Case 2. }Assume that\textit{\ }$\left\vert u\left( 1\right)
\right\vert >1$. Then the set $\widetilde{E}:=\left\{ t\in \left[ 0,1\right)
:\left\vert u\left( s\right) \right\vert >1,\text{ for }s\in \lbrack
t,1]\right\} $ is nonempty. Let $\beta =\inf \widetilde{E}$. Multiplying
both sides of (A.5) by $\left( p-2\right) \left\vert u\left( t\right)
\right\vert ^{p-3}e^{(p-2)\int\limits_{0}^{t}\left\vert u\left( \tau \right)
\right\vert ^{p-2}d\tau }$ and integrating the obtained inequality over $%
\left( s,T\right) $, we get%
\begin{equation*}
\left\vert u\left( T\right) \right\vert ^{p-2}\leq e^{-\left( p-2\right)
\int\limits_{s}^{T}\left\vert u\left( t\right) \right\vert
^{p-2}dt}\left\vert u\left( s\right) \right\vert ^{p-2}
\end{equation*}%
\begin{equation*}
+\left( p-2\right) \left\Vert f\right\Vert _{L^{\infty }\left( 0,1\right)
}\int\limits_{s}^{T}e^{-\left( p-2\right) \int\limits_{t}^{T}\left\vert
u\left( \tau \right) \right\vert ^{p-2}d\tau }\left\vert u\left( t\right)
\right\vert ^{p-3}dt\text{, \ \ }\beta \leq s\leq T\leq 1.
\end{equation*}%
By the definition of $\beta $, we have%
\begin{equation*}
\left( p-2\right) \int\limits_{s}^{T}e^{-\left( p-2\right)
\int\limits_{t}^{T}\left\vert u\left( \tau \right) \right\vert ^{p-2}d\tau
}\left\vert u\left( t\right) \right\vert ^{p-3}dt
\end{equation*}%
\begin{equation*}
\leq \left( p-2\right) \int\limits_{s}^{T}e^{-\left( p-2\right)
\int\limits_{t}^{T}\left\vert u\left( \tau \right) \right\vert ^{p-2}d\tau
}\left\vert u\left( t\right) \right\vert ^{p-2}dt
\end{equation*}%
\begin{equation*}
=\int\limits_{s}^{T}\frac{d}{dt}e^{-\left( p-2\right)
\int\limits_{t}^{T}\left\vert u\left( \tau \right) \right\vert ^{p-2}d\tau
}dt\leq 1,
\end{equation*}%
for every $s\in \lbrack \beta ,T]$. Hence, by the last two inequalities, we
find%
\begin{equation}
\left\vert u\left( T\right) \right\vert ^{p-2}\leq e^{-\left( p-2\right)
\int\limits_{s}^{T}\left\vert u\left( t\right) \right\vert
^{p-2}dt}\left\vert u\left( s\right) \right\vert ^{p-2}+\left\Vert
f\right\Vert _{L^{\infty }\left( 0,1\right) },\text{ \ \ }\beta \leq s\leq
T\leq 1.  \tag{A.9}
\end{equation}%
Now, if $\beta \in \left( 0,s_{0}\right] $, then by the continuity of $u$,
we have $u\left( \beta \right) =1$. So, choosing $s=\beta $ in (A.9), we
obtain (A.6). If $\beta \in \left( s_{0},1\right) $, then again choosing $%
s=\beta $ in (A.9) and taking into account that $u\left( \beta \right) =1$,
we get%
\begin{equation*}
\left\vert u\left( T\right) \right\vert ^{p-2}\leq 1+\left\Vert f\right\Vert
_{L^{\infty }\left( 0,1\right) },\ \ \ \forall T\in \left[ \beta ,1\right] .
\end{equation*}%
Since $u\left( \beta \right) =1$, taking $t=\beta $ instead of $t=1$ and
applying the procedure of Case 1, one can show that%
\begin{equation*}
\left\vert u\left( t\right) \right\vert ^{p-2}\leq \frac{p}{p-2},
\end{equation*}%
for every $t\in \left[ s_{0},\beta \right] .$ So, by the last two
inequalities, we again obtain (A.6). If $\beta =0$, then integrating (A.9)
over $\left[ 0,T\right] $ with respect to $s$, we get%
\begin{equation*}
T\left\vert u\left( T\right) \right\vert ^{p-2}\leq \frac{1}{p-2}%
+T\left\Vert f\right\Vert _{L^{\infty }\left( 0,1\right) },
\end{equation*}%
and consequently%
\begin{equation*}
\left\vert u\left( T\right) \right\vert ^{p-2}\leq \frac{1}{s_{0}(p-2)}%
+\left\Vert f\right\Vert _{L^{\infty }\left( 0,1\right) }<\left\Vert
f\right\Vert _{L^{\infty }\left( 0,1\right) }
\end{equation*}%
\begin{equation*}
+\frac{1}{\left( 1-\frac{p-2}{p}\right) (p-2)}<\frac{p}{p-2}+\left\Vert
f\right\Vert _{L^{\infty }\left( 0,1\right) },
\end{equation*}%
for every $T\in \lbrack s_{0},1]$. The last inequality gives us (A.6).
\end{proof}


\begin{thebibliography}{99}
\bibitem{1} V. Kalantarov, Attractors for some nonlinear problems of
mathematical physics, Zap. Nauchn. Sem. Leningrad. Otdel. Mat. Inst.
Steklov, (LOMI), 152 (1986) 50--54.

\bibitem{2} J.M. Ghidaglia and A. Marzocchi, Longtime behaviour of strongly
damped wave equations, global attractors and their dimension, SIAM J. Math.
Anal., 22 (1991) 879--895.

\bibitem{3} S. Zhou, Global attractor for strongly damped nonlinear wave
equations, Funct. Diff. Eqns., 6 (1999) 451--470.

\bibitem{4} A.N. Carvalho and J.W. Cholewa, Attractors for strongly damped
wave equations with critical nonlinearities, Pacific J. Math., 207 (2002)
287--310.

\bibitem{5} V. Pata and M. Squassina, On the strongly damped wave equation,
Commun. Math. Phys., 253 (2005) 511--533.

\bibitem{6} V. Pata and S. Zelik, Smooth attractors for strongly damped wave
equations, Nonlinearity, 19 (2006) 1495--1506.

\bibitem{7} M. Yang and C. Sun, Attractors for strongly damped wave
equations, Nonlinear Anal.: Real World Applications, 10 (2009) 1097-1100.

\bibitem{8} F. Dell'Oro and V. Pata, Long-term analysis of strongly damped
nonlinear wave equations, Nonlinearity, 24 (2011) 3413--3435.

\bibitem{9} F. Dell'Oro and V. Pata, Strongly damped wave equations with
critical nonlinearities, Nonlinear Anal., 75 (2012) 5723--5735.

\bibitem{10} A.Kh. Khanmamedov, Global attractors for strongly damped wave
equations with displacement dependent damping and nonlinear source term of
critical exponent, Discrete Contin. Dyn. Syst. Ser. A, 31 (2011) 119--138.

\bibitem{11} A.Kh. Khanmamedov, Strongly damped wave equation with
exponential nonlinearities, J. Math. Anal. Appl., 419 (2014) 663--687.

\bibitem{12} A.Kh. Khanmamedov, On the existence of a global attractor for
the wave equation with nonlinear strong damping perturbed by nonmonotone
term, Nonlinear Anal., 69 (2008) 3372-3385.

\bibitem{13} I. Chueshov and I. Lasiecka, Long time behavior of second order
evolution equations with nonlinear damping. Mem. Amer. Math. Soc., 195
(2008).

\bibitem{14} F. Chen, B. Guo and P. Wang, Long time behavior of strongly
damped nonlinear wave equations, J. Diff. Equations, 147 (1998) 231-241.

\bibitem{15} V. Kalantarov and S. Zelik, Finite-dimensional attractors for
the quasi-linear strongly-damped wave equation, J. Diff. Equations, 247
(2009) 1120-1155.

\bibitem{16} J.M. Ball, Global attractors for semilinear wave equations,
Discrete Contin. Dyn. Syst. Ser. A, 10 (2004) 31--52.

\bibitem{17} A.Kh. Khanmamedov, Global attractors for von Karman equations
with nonlinear interior dissipation, J. Math. Anal. Appl., 318 (2006)
92--101.

\bibitem{18} I. Chueshov and I. Lasiecka, Von Karman Evolution
Equations:Well-posedness and long-time dynamics, Springer, New York, 2010.

\bibitem{19} S. Larsson and V. Thomee, Partial Differential Equations with
Numerical Methods, Springer-Verlag Berlin Heidelberg, 1$^{\text{st}}$
edition, 2003.

\bibitem{20} A.V. Babin and M.I. Vishik, Attractors for evolution equations,
North-Holland, Amsterdam, 1992.
\end{thebibliography}
\end{document}